\theoremstyle{plain}
\newtheorem{theorem}{Theorem}[section]
\newtheorem{lemma}[theorem]{Lemma}
\newtheorem{conjecture}[theorem]{Conjecture}
\newtheorem{question}[theorem]{Question}
\newcommand{\R}{\mathbb{R}}
\newcommand{\Q}{\mathbb{Q}}
\newcommand{\N}{\mathbb{N}}
\theoremstyle{definition}\newtheorem{definition}[theorem]{Definition}
\theoremstyle{remark}
\theoremstyle{remark}
\begin{document}
\title{Unions of regular polygons with large perimeter-to-area ratio}

\author{Viktor Kiss\thanks{Partially supported by the
Hungarian Scientific Foundation grants no.~104178, 105645.}
\ and
Zolt\'an Vidny\'anszky\thanks{Partially supported by the
Hungarian Scientific Foundation grant no.~104178.}}

\maketitle

\begin{abstract}
  T.~Keleti \cite{keleti} asked, whether the ratio of the perimeter and the 
  area of a finite union of unit squares is always at most 4. In this paper 
  we present an example where the ratio is greater than 4. We also answer the analogous question for regular triangles negatively and list a number of open problems. 
\end{abstract}

\section{Introduction}
Tam\'as Keleti \cite{keleti} proved that if we take a finite union of unit squares, 
then the perimeter-to-area ratio of the union cannot be arbitrarily large. In fact, he proved a general result: the perimeter to the area ratio of a finite union of congruent compact convex sets is bounded.

One would be tempted to think that the upper bound is realised by a single set, however this is not the case, even if we consider solely compact convex polygons. Gyenes \cite{gyenesszak} gave an example, where the perimeter-to-area ratio of the union of two polygons
exceeds the perimeter-to-area ratio of a single one. On the other hand for discs this statement holds true. 

So it is very natural to ask the following:

\begin{question}
 (Keleti) Is it true that the perimeter-to-area ratio of a single regular $n$-gon with side length $1$ maximises the perimeter-to-area ratio of the union of regular $n$-gons with side length $1$?
\end{question}

Gyenes gave a new proof for the boundedness of the ratio, improving the 
upper bound to approximately $5.6$ in the case of squares. Also, he proved that the upper bound is $4$, if we consider
squares with common centre or with sides parallel to the axis, or if we consider the union of two squares (see \cite{gyenes},\cite{gyenesszak}).

P. Humke, C. Marcott, B. Mellem and C. Stiegler \cite{humke},\cite{humke1} investigated the differentiation properties of the perimeter and area functions related to Keleti's question. This question has also generated quite some interest on MathOverflow
 \cite{dome}, with comments e.g. from W. T. Gowers. 

In this paper we give a negative answer to this question for $n=3$ and $n=4$ with completely elementary proofs. We present some examples with large perimeter-to-area ratio for $n=4$ and finally we list a number of open problems. The idea of the counterexamples is originated from the results obtained using a  probabilistic computer algorithm. In our experiments, we used the open source JTS Topology Suite library.

\textbf{Acknowledgement.} We are very grateful to Tam\'as Keleti, M\'arton Elekes and Andr\'as M\'ath\'e for their useful remarks and suggestions.

\section{Results}
We will denote the area of a polygon $P$ by $a(P)$, and the perimeter by $p(P)$. $O$ stands for the origin. First we need some technical definitions.

\begin{definition}
 Let $k$ and $n$ be coprime natural numbers. A \textit{basic $(k,n)$-setup} is $k$ many regular $n$-gons with side length $1$ so that all of the $n$-gons have origin centre and their vertices form a regular $kn$-gon.
\end{definition}
\begin{definition}
Suppose that we have a finite collection of regular polygons $P_1,\dots,P_k$ in general position, i.e., no vertex lies on the side of another polygon. Suppose that the boundary of $\bigcup_{1 \leq j \leq k} P_i$ is a closed simple (not self-intersecting) polygonal chain. Let $A_1,\dots,A_l$ be an enumeration of the vertices of the boundary. We call the translation by $v \in \R^2$ of one of the polygons $P_i$ \textit{pattern preserving}, if for every $t\in [0,1]$ 
\begin{itemize}
                                                                                                                                                                                                                                                                                                                                                                                                                                                                                    \item the polygons $P_1, \dots, P_i+tv,\dots,P_k$ are in a general position
	    \item the boundary of $(\bigcup_{j \not = i} P_j) \cup (P_i+tv)$ is also a closed 
	    simple polygonal chain with $l$ vertices, and its vertices can be enumerated as $A'_1,\dots,A'_l$ so that for all $1 \leq j \leq k$ and $1\leq  m \leq l$ we have \[A'_m \in P_j \iff A_m \in P_j \text{ if } j \not =i\] and \[A'_m \in P_i+tv \iff A_m \in P_i. \]																										  
                                                                                                                                                                                                                                                                                                                                                                                                                                                                                   \end{itemize}
  The translation is called \textit{regular}, if $v$ is parallel to the vector from the centre of $P_i$ to one of the vertices of $P_i$.
\end{definition}
The latter definition intuitively means that the translation preserves the pattern of intersections on the boundary of the union of the polygons. 

\begin{definition}
Consider a basic $(k,n)$-setup.  Let us mark 
  one vertex of each $n$-gon in such a way that these marked vertices 
  form a regular $k$-gon with centre $O$ and denote them by 
  $A_1,\dots,A_k$. It is easy to see that for a small enough $\varepsilon>0$ the translations by 
  $\varepsilon \cdot \overrightarrow{OA}_1,\dots,\varepsilon \cdot \overrightarrow{OA}_k$, realised in this order, are pattern preserving regular transformations. After applying these transformations to the polygons, the collection of the translated $n$-gons is called an \textit{shifted $(k,n)$-setup}.

\end{definition}

An instance of a shifted $(5,4)$-setup can be seen on Figure \ref{negyz}.
\subsection{A counterexample of 5 squares}
\begin{theorem}
 \label{t:squares}
 There exists a polygon that is the union of $5$ squares with side length $1$ for which the perimeter-to-area ratio is greater then $4$. 
\end{theorem}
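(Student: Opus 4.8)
The plan is to exhibit an explicit configuration of five unit squares, take a specific shifted $(5,4)$-setup as constructed in the definitions, and directly compute its perimeter-to-area ratio, showing it exceeds $4$. The strategy rests on the observation that if we start from a basic $(5,4)$-setup — five unit squares sharing the origin as common centre, with their $20$ vertices forming a regular $20$-gon — and then slide each square slightly outward along the direction $\overrightarrow{OA_i}$ to a vertex, we create a ``pinwheel'' shape whose boundary gains perimeter (the squares overlap less, so more boundary is exposed) while the area grows only slowly. Let me sketch the proof of this.

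First I would set up coordinates and parametrise the shift by $\varepsilon>0$, writing both the total perimeter $p(\varepsilon)$ and the total area $a(\varepsilon)$ of the union as functions of $\varepsilon$. At $\varepsilon=0$ the five squares coincide perfectly in their centre but are rotated by multiples of $\tfrac{360^\circ}{20}=18^\circ$ relative to each other; the union is already a fixed compact polygon $U_0$ with some ratio $r_0 = p(0)/a(0)$. The key point is to understand how $p$ and $a$ change to first order in $\varepsilon$. Because the translations are pattern preserving (guaranteed for small $\varepsilon$ by the cited definition of a shifted setup), the combinatorial structure of the boundary does not change, so $p(\varepsilon)$ and $a(\varepsilon)$ are given by fixed polynomial (in fact piecewise-linear/quadratic) formulas in $\varepsilon$ that I can read off from the vertex coordinates.

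The heart of the argument is then a first-order comparison: I would compute the derivatives $p'(0)$ and $a'(0)$ and verify the inequality $p'(0)/a'(0) > 4$, or equivalently $p'(0) > 4\,a'(0)$, together with checking that $r(0)$ is already close enough to $4$ (or that the ratio is increasing) so that $r(\varepsilon) = p(\varepsilon)/a(\varepsilon) > 4$ for suitably small $\varepsilon$. Concretely, $\tfrac{d}{d\varepsilon} r = \tfrac{p'a - p a'}{a^2}$, so it suffices to show $p'(0)\,a(0) - p(0)\,a'(0) > 0$ once we know $r(0)=4$ (for a common-centre square configuration the ratio of a single square is exactly $4$, and one expects the symmetric basic setup to also give $r(0)=4$), which reduces the whole theorem to a single sign computation. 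The geometric intuition is that sliding outward exposes new edge length essentially linearly in $\varepsilon$ while only adding area of order $\varepsilon$ times a coefficient small enough that the perimeter wins.

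The main obstacle I anticipate is the bookkeeping of the boundary geometry: determining exactly which edges of which squares form the boundary of the union $U_0$ at $\varepsilon=0$, locating all the intersection points, and then tracking how each boundary segment lengthens or shortens as the five squares slide simultaneously in their five different directions. Since the configuration has only five-fold dihedral symmetry (not full rotational symmetry of the individual squares), I would exploit the $5$-fold rotational symmetry to reduce the computation to one ``fundamental wedge'' of angle $72^\circ$ and multiply by $5$, which keeps the algebra manageable. The remaining risk is verifying the general-position and simple-closed-chain hypotheses hold throughout the slide so that the pattern-preserving formulas remain valid; I would handle this by choosing $\varepsilon$ explicitly small and checking the finitely many potential degeneracies (a vertex meeting an edge) do not occur below that threshold.
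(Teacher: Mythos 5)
Your construction is the same as the paper's (the shifted $(5,4)$-setup, starting from the common-centre basic setup whose ratio is $4$ --- that last fact is indeed a lemma in the paper, proved by decomposing the union into triangles over the origin with a common height, so your ``one expects $r(0)=4$'' does need, and has, an elementary proof). The genuine gap is in your reduction of the theorem to the single sign computation $p'(0)\,a(0) - p(0)\,a'(0) > 0$: this quantity is \emph{zero}, so the first-order analysis you propose is inconclusive. In this configuration the perimeter is \emph{exactly} constant in $\varepsilon$ (the paper proves this by an exact cancellation: near each vertex of each shifted square, the lengthening of one boundary segment is matched by an equal shortening of an adjacent one, via congruent small triangles), so $p'(0)=0$; and the area change is of second order, $a(\varepsilon) - a(0) = 10\,(t - T)$ where $t$ and $T$ are areas of two thin hexagons differing by $O(\varepsilon^2)$, so $a'(0)=0$ as well. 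The vanishing of $a'(0)$ is forced by symmetry: the first-order area contribution of shifting one square along $\overrightarrow{OA_1}$ comes from the four exposed ``caps'' of that square, and the caps at $A_1$ and $A_3$ (and at $A_2$ and $A_4$) contribute opposite boundary integrals $\int u \cdot n \, ds$ that cancel exactly. So $r'(0)=0$, and your criterion ``the ratio is increasing at $\varepsilon = 0$'' fails at first order.

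Your geometric intuition is also backwards, which is what leads to the flawed reduction: sliding the squares outward does \emph{not} expose extra perimeter while slowly adding area. What actually happens --- and what makes the theorem true --- is that the perimeter stays exactly the same while the \emph{area strictly decreases}, by an amount of order $\varepsilon^2$ (roughly $10\tan(36^\circ)\,\varepsilon^2$). To repair your plan you would have to carry the expansion of $a(\varepsilon)$ to second order and show $a''(0)<0$ while $p(\varepsilon)\equiv p(0)$; doing that bookkeeping segment by segment is essentially the paper's proof, so nothing short of the second-order (or exact) computation will close the gap.
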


First we reiterate the proof of Gyenes for a basic $(k,n)$-setup.

\begin{lemma}
  \label{common_centre}
  The perimeter-to-area ratio of the union of the $n$-gons in a basic $(k,n)$-setup is equal to the perimeter-to-area ratio of a single $n$-gon.
\end{lemma}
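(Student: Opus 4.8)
The plan is to exploit the fact that all the $n$-gons in a basic $(k,n)$-setup share the common centre $O$, so that every one of their edges lies on a line at one and the same distance from $O$. Write $\rho$ for this distance, i.e.\ the apothem (inradius) of a regular $n$-gon with side length $1$; since every $n$-gon in the setup is a rotation about $O$ of a single one, each edge of each $n$-gon lies on a line tangent to the circle of radius $\rho$ centred at $O$. The one identity I would establish is that for \emph{any} simple polygon which is star-shaped with respect to an interior point $O$ and all of whose edges lie on lines at distance $\rho$ from $O$, the area equals $\tfrac12\rho\,p$, where $p$ is the perimeter. Applying this both to a single $n$-gon and to the union $U$ of the whole setup immediately gives $p(U)/a(U)=2/\rho=p(P)/a(P)$, which is the assertion.

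First I would check that $U$ is star-shaped with respect to $O$. Each $P_i$ is convex and contains $O$ (indeed it contains the disc of radius $\rho$ about $O$), hence is star-shaped with respect to $O$; and a union of sets star-shaped with respect to a common point is again star-shaped with respect to that point. Next, the boundary $\partial U$ is a simple closed polygonal chain, and each of its edges is a subsegment of an edge of one of the $P_i$; consequently every edge of $\partial U$ lies on a line at distance exactly $\rho$ from $O$.

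The area identity then follows by the standard apothem/triangulation computation. Joining $O$ to every vertex of $\partial U$ decomposes $U$ into triangles, one over each boundary edge $e$, and because $U$ is star-shaped with respect to $O$ these triangles tile $U$ without overlap. The triangle over $e$ has base $|e|$ and height equal to the distance from $O$ to the line carrying $e$, namely $\rho$, so its area is $\tfrac12\rho\,|e|$. Summing over all boundary edges yields $a(U)=\tfrac12\rho\,p(U)$, and the same computation applied to a single regular $n$-gon gives $a(P)=\tfrac12\rho\,p(P)$; dividing, the common factor $\tfrac12\rho$ cancels and both ratios equal $2/\rho$.

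The main thing to be careful about is the geometric bookkeeping rather than any hard estimate: one must confirm that every boundary edge of the union really is carried by an edge of some $P_i$ (so that its supporting line sits at distance exactly $\rho$), and that star-shapedness makes the triangulation from $O$ a genuine tiling with no cancellation of signed areas. Once these structural facts are in place, the perimeter-to-area ratio is forced to be the constant $2/\rho$, independent of $k$, which is exactly why a basic $(k,n)$-setup cannot beat a single $n$-gon.
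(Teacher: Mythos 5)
Your proof is correct and follows essentially the same route as the paper: both compute the area of the union by fanning triangles out from $O$ over the boundary edges, observe that every boundary edge lies at the common apothem distance from $O$, and conclude that the ratio is $2/\rho$ regardless of $k$. The only difference is that you explicitly justify the fan decomposition via star-shapedness, a point the paper dismisses with ``clearly,'' so your write-up is if anything slightly more complete.
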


\begin{proof}
  The union is a polygon, let us denoted the vertices by $A_0A_1\dots A_l$ and the distance of the line segment $AB$ from the origin by $d_{AB}$. Now clearly $a(A_0A_1\dots A_l)=\frac{1}{2}(|A_0A_1|d_{A_0A_1}+|A_1A_2|d_{A_1A_2}+\dots+|A_lA_0|d_{A_lA_0})$. But since $A_iA_{i+1}$ are segments which are subsets of the boundaries of congruent regular $n$-gons with origin centre, we have that $d_{A_0A_1}=d_{A_1A_2}=\dots=d_{A_lA_0}=d$. So \[\frac{p(A_0A_1\dots A_l)}{a(A_0A_1\dots A_l)}=\frac{2}{d}\] which is the same as in the case of a single $n$-gon. 
\end{proof}

\textit{Proof of Theorem \ref{t:squares}.}
  We begin with a basic $(5,4)$-setup. Let us denote the squares in 
  the construction by $P_1$, $P_2$, $P_3$, $P_4$ and $P_5$. We move the polygons to a shifted $(5,4)$-setup by translating to the directions of 
  $A_1$, $B_1$, $C_1$, $D_1$ and $E_1$ respectively, as in Figure \ref{negyz}. We will denote 
  the other three vertices of $P_1$ by $A_2$, $A_3$ and $A_4$, and 
  suppose that they lie on the boundary in that order. We will denote 
  the vertices of the other polygons similarly. 
  Also, we denote the vertices of the translate of $P_1$ by
  $A_1'$, $A_2'$, $A_3'$ and $A_4'$, and use analogous notations for 
  the translates of the other polygons. 
  
  From the previous lemma we have that the perimeter-to-area ratio of 
  the union of the polygons in the basic $(5,4)$-setup is exactly 4. We show that the perimeter of the union of the polygons in the shifted setup will remain the same while 
  the area decreases.
\begin{lemma}
  The perimeter of the union of the squares in the shifted and in the basic setup are equal.
\end{lemma}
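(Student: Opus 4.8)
My plan is to realise the passage from the basic to the shifted setup as a single one–parameter family and then exploit a point symmetry. For $i=1,\dots,5$ let $u_i$ be the displacement (the scaled vector from $O$ to the marked vertex) that the shifted setup applies to $P_i$, and move all five squares simultaneously: set $U(t)=\bigcup_i (P_i+t\,u_i)$ and $p(t)=p(U(t))$, so that $p(0)$ is the basic and $p(1)$ the shifted perimeter (for small $\varepsilon$ the simultaneous family reaches the same shifted setup and is pattern preserving). Because the motion is pattern preserving, the boundary of $U(t)$ is for every $t$ a simple closed polygonal chain with the same number of vertices, and each of its edges lies on a side of one of the translated squares. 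Hence every boundary edge keeps a \emph{fixed direction} (the line carrying that side only translates with its square), while its two endpoints — each either a corner of a square or a transversal intersection sliding along a fixed side — move \emph{linearly} in $t$. Each edge length is therefore the inner product of a linearly moving difference vector with a constant unit direction, i.e. an affine function of $t$, and so $p(t)$ is affine.

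Next I would enlarge the domain of affineness and then kill the slope by symmetry. The vertices opposite to the marked ones, being their reflections through $O$, also form a regular pentagon centred at $O$, so translating each square towards its opposite vertex is again a shifted $(5,4)$-setup; thus the family is pattern preserving for $t\in[-1,1]$ and $p$ is affine on this whole interval. Now let $\rho$ be the rotation by $180^{\circ}$ about $O$. Each basic square is centrally symmetric, so $\rho$ fixes it as a set and sends $t\,u_i$ to $-t\,u_i$; consequently $\rho$ maps $P_i+t\,u_i$ onto $P_i-t\,u_i$ and hence $U(t)$ onto $U(-t)$. Since perimeter is invariant under isometries, $p(t)=p(-t)$, so $p$ is even. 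An affine even function is constant, whence $p(1)=p(0)$, which is exactly the assertion.

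The delicate point, and the step I expect to be the real work, is the combinatorial bookkeeping that makes $p$ \emph{exactly} affine on a full neighbourhood of $0$: one must check that the basic configuration is genuinely in general position — no corner of a square on a side of another (which follows from the fact that every vertex lies on the common circumscribed circle and so sticks out past every other square), no two intersection points colliding, no three sides concurrent — so that the combinatorial type of the boundary does not change as $t$ crosses $0$ and no edge collapses. With only one–sided control the symmetry would still give $p(t)=p(-t)$ but could not exclude a $\Lambda$-shaped graph; it is genuine (two–sided) affineness at $0$ that forces the slope to vanish. Regularity of the translations is exactly what makes this work: it renders both $\pm u_i$ admissible shifted setups and simultaneously makes $\rho$ a symmetry of the basic configuration. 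Once the general-position verification is in place, the even-plus-affine argument closes the proof in one line.
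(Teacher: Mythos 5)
Your proof is correct, but it takes a genuinely different route from the paper's. The paper shifts the squares one at a time and does local bookkeeping: near each vertex of the moved square it exhibits congruent triangles (with angle pairs $45^\circ$--$63^\circ$ and $18^\circ$--$90^\circ$) showing that every increase in the length of a boundary segment is exactly offset by a decrease in an adjacent one, so the perimeter is unchanged after each single shift. You instead move all five squares simultaneously, observe that along a combinatorially stable deformation every boundary edge has fixed direction and affinely moving endpoints (corners translate; transversal intersections solve a linear system with constant matrix and affine right-hand side), so $p(t)$ is affine, and then kill the slope via the central symmetry $\rho(U(t))=U(-t)$, which is available because a square centred at $O$ satisfies $-P_i=P_i$ and the opposite-vertex translations again constitute a shifted setup. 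Your approach buys conceptual clarity and avoids all trigonometry: affine plus even equals constant, and it correctly isolates the only real work, namely the two-sided stability of the boundary pattern at $t=0$ --- a point the paper itself treats as evident (``it is easy to see that for small enough $\varepsilon$ the translations are pattern preserving''), so your proof is no less rigorous than the original. What the paper's local argument buys in exchange: it proves the finer statement that the perimeter is preserved after each individual shift (not merely between the two endpoints of the deformation); it uses only the angles between adjacent boundary segments and pattern preservation, as the authors explicitly note; and it does not depend on central symmetry of the tile, so the same template transfers to the equilateral-triangle $(4,3)$-setup treated later in the paper, where your $\rho$-argument fails as stated because $-P_i\neq P_i$ for a triangle.
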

\begin{wrapfigure}{r}{4cm}
\definecolor{uuuuuu}{rgb}{0.27,0.27,0.27}
\definecolor{xdxdff}{rgb}{0.49,0.49,1}
\definecolor{qqqqff}{rgb}{0,0,1}
\begin{tikzpicture}[line cap=round,line join=round,>=triangle 45,x=1.6cm,y=1.6cm]
\clip(-1.52,-0.1) rectangle (0.55,1.14);
\draw (0,0.6)-- (-0.6,0);
\draw (-1.36,0.39)-- (-0.6,0);
\draw (-1.44,0.22)-- (-1.36,0.39);
\draw (-0.86,0.14)-- (0,1);
\draw (-0.6,0.4)-- (-0.6,0);
\draw (0.33,0.27)-- (0,0.6);
\draw (0,1)-- (0.5,0.5);
\draw (0,0.6)-- (0,1);
\begin{scriptsize}
\fill [color=black] (0,1) circle (1.5pt);
\draw[color=black] (0.09,1.08) node {$A'_1$};
\fill [color=black] (0,0.6) circle (1.5pt);
\draw[color=black] (0.095,0.69) node {$A_1$};
\fill [color=black] (-0.6,0) circle (1.5pt);
\draw[color=black] (-0.5,-0.03) node {$R$};
\fill [color=black] (-1.36,0.39) circle (1.5pt);
\draw[color=black] (-1.29,0.47) node {$E_2$};
\fill [color=black] (-0.86,0.14) circle (1.5pt);
\draw[color=black] (-0.9,0.05) node {$M$};
\fill [color=black] (-0.6,0.4) circle (1.5pt);
\draw[color=black] (-0.68,0.46) node {$N$};
\end{scriptsize}
\end{tikzpicture}
\captionof{figure}{}
   \label{perim}
\end{wrapfigure}
\textit{Proof.}
  Let $\varepsilon$ be the size of the shift. 
  First, shift one square only, say $P_1$. Let $T$ be a triangle with 
  sides $a, b$ and $c$, where $c = \varepsilon$ and the angle opposite 
  of side $a$ is $45^\circ$, while the angle opposite of side $b$ is 
  $63^\circ$. 
  We use the notations of Figure \ref{perim}. As can be seen, the 
  triangle $RMN$ is congruent to $T$, since $\angle MNR = 45^\circ$ 
  and $\angle NRM = 108^\circ - 45^\circ = 63^\circ$, 
  where $R$ was the intersection 
  of a segment lying on $A_1$ and the adjacent segment lying on $E_2$, 
  $M$ is the intersection of the translated square with the same 
  segment, and $N$ is on the line $MA'_1$ such that $A_1A'_1$ is 
  parallel to $RN$. Hence the length of the segment passing through 
  $A_1$ increases with $MN = b$, the adjacent segment, passing through 
  $E_2$ decreases with $MR = a$. 
  
  It is easy to see that it is the same with the other segment 
  lying on $A_1$. Meanwhile exactly the opposite happens near the 
  vertex $A_3$, since the figure is similar and the 
  length of the segments lying on $A_3$ decreases with $b$ and the 
  length of the adjacent segments increases with $b$. Hence the 
  perimeter of the union 
  remains the same considering the changes only near this two 
  vertices.

  \begin{wrapfigure}{l}{2cm}
\definecolor{uuuuuu}{rgb}{0.26666666666666666,0.26666666666666666,0.26666666666666666}
\definecolor{qqqqff}{rgb}{0.0,0.0,1.0}
\begin{tikzpicture}[line cap=round,line join=round,>=triangle 45,x=1.5cm,y=1.5cm]
\clip(-1.1,-0.8) rectangle (-0.2,1.3);
\draw (-0.4,-0.2)-- (-1.0,0.4);
\draw (-1.0,0.0)-- (-0.4,0.6);
\draw (-0.4,-0.2)-- (-0.6487555697199214,-0.6882102944352365);
\draw (-0.5350160607534188,-0.4649839392465812)-- (-1.0,0.0);
\draw (-0.6476013740677972,1.0859450579229692)-- (-0.4,0.6);
\draw (-0.5350160607534187,0.8649839392465811)-- (-1.0,0.4);
\draw (-0.6476013740677972,1.0859450579229692)-- (-0.4763614085399079,1.1731961783299776);
\draw (-0.6487555697199214,-0.6882102944352365)-- (-0.4825512363503266,-0.7728956321033013);
\draw (-0.5999999999999999,0.8)-- (-0.4,0.6);
\draw (-0.4,-0.2)-- (-0.6000000000000001,-0.39999999999999997);
\begin{scriptsize}
\draw [fill=black] (-1.0,0.0) circle (1.5pt);
\draw[color=black] (-0.85,-0.01) node {$A_2$};
\draw [fill=black] (-1.0,0.4) circle (1.5pt);
\draw[color=black] (-0.85,0.4) node {$A'_2$};
\draw [fill=black] (-0.4,-0.2) circle (1.5pt);
\draw[color=black] (-0.36173535045143507,-0.1) node {$F$};
\draw [fill=black] (-0.4,0.6) circle (1.5pt);
\draw[color=black] (-0.34,0.7) node {$X$};
\draw [fill=black] (-0.6487555697199214,-0.6882102944352365) circle (1.5pt);
\draw[color=black] (-0.78,-0.61) node {$E_3$};
\draw [fill=black] (-0.5350160607534188,-0.4649839392465812) circle (1.5pt);
\draw[color=black] (-0.42,-0.44) node {$G$};
\draw [fill=black] (-0.6476013740677972,1.0859450579229692) circle (1.5pt);
\draw[color=black] (-0.6014174294324705,1.23) node {$B_1$};
\draw [fill=black] (-0.5350160607534187,0.8649839392465811) circle (1.5pt);
\draw[color=black] (-0.43,0.9373325851482243) node {$Z$};
\draw [fill=black] (-0.5999999999999999,0.8) circle (1.5pt);
\draw[color=black] (-0.64,0.8748068254140412) node {$Y$};
\draw [fill=black] (-0.6000000000000001,-0.39999999999999997) circle (1.5pt);
\draw[color=black] (-0.76,-0.42) node {$H$};
\end{scriptsize}
\end{tikzpicture}
\captionof{figure}{}
   \label{perim2}
\end{wrapfigure}
  
  Now we take a look at the segments lying on $A_2$. 
  Let $T'$ be a triangle with sides $a', b'$ and $c'$, where 
  $c' = \varepsilon / \sqrt{2}$, and the angle opposite 
  of side $a'$ is $18^\circ$, while the angle opposite of side $b$ is 
  $90^\circ$. 
  We use the notations of Figure \ref{perim2}, where the node $X$ is 
  the intersection of two adjacent segment of the boundary of the 
  original squares passing through $B_1$ and $B_2$, $Z$ is the 
  intersection of the translated segment through $A'_2$ and the one 
  through $B_1$, while $Y$ is a point on $A'_2Z$ such that $XY$ is 
  perpendicular to $ZA'_2$. It is easy to see that triangle $XYZ$ is 
  congruent to $T'$, since $\angle XYZ = 90^\circ$ and 
  $\angle ZXY = 108^\circ - 90^\circ = 18^\circ$. 
  And the triangle $FGH$ is also congruent to $T'$, where the points 
  $F, G$ and $H$ are achieved similarly to $X, Y$ and $Z$, only they 
  are on the other side of the point $A_2$ and $A'_2$. 
  
  The upper segment in the figure, passing through $A_2$ decreased with 
  $\varepsilon / \sqrt{2}$ and increased with $YZ = a'$. The lower 
  one increased with $\varepsilon / \sqrt{2}$ and decreased with 
  $HG = a'$. The length of the segment on the boundary adjacent to the 
  upper segment passing through $A_2$ decreased with $XZ = b'$, and 
  the length of the segment adjacent to the lower segment passing 
  through $A_2$ increased with $FG = b'$, hence the changes near 
  vertex $A_2$ cancel each other out, the boundary remains the same. A similar argument can be said about the line segments around $A_4$, so 
  the perimeter does not change while translating $P_1$. 
  
  Notice that the proof here only used that the angles of the adjacent segments 
  is the same as in a basic $(5,4)$-setup, and that the pattern 
  on the boundary is preserved, hence shifting the squares one by one, 
  the same argument yields that the perimeter remains the same.
\qed

\begin{lemma}
  The area of the union of the squares in a shifted $(5, 4)$-setup is 
  less than the area in a basic $(5, 4)$-setup.
\end{lemma}

\begin{wrapfigure}{r}{2cm}
	\centering
	\definecolor{zzttqq}{rgb}{0.6,0.2,0}
	\definecolor{uuuuuu}{rgb}{0.267,0.267,0.267}
	\begin{tikzpicture}[line cap=round,line join=round,>=triangle 45,x=12.0cm,y=12.0cm]
	\clip(0.33,0.457) rectangle (0.454,0.687);
	\fill[color=zzttqq,fill=zzttqq,fill opacity=0.1] (0.395,0.679) -- (0.414,0.669) -- (0.359,0.561) -- (0.445,0.475) -- (0.43,0.46) -- (0.333,0.557) -- cycle;
	\draw [color=zzttqq] (0.395,0.679)-- (0.414,0.669);
	\draw [color=zzttqq] (0.414,0.669)-- (0.359,0.561);
	\draw [color=zzttqq] (0.359,0.561)-- (0.445,0.475);
	\draw [color=zzttqq] (0.445,0.475)-- (0.43,0.46);
	\draw [color=zzttqq] (0.43,0.46)-- (0.333,0.557);
	\draw [color=zzttqq] (0.333,0.557)-- (0.395,0.679);
	\begin{scriptsize}
	\fill [color=black] (0.43,0.46) circle (1.5pt);
	\draw[color=black] (0.409,0.466) node {$B$};
	\fill [color=black] (0.333,0.557) circle (1.5pt);
	\draw[color=black] (0.337,0.54) node {$C$};
	\fill [color=black] (0.359,0.561) circle (1.5pt);
	\draw[color=black] (0.374,0.562) node {$F$};
	\fill [color=black] (0.445,0.475) circle (1.5pt);
	\draw[color=black] (0.445,0.49) node {$A$};
	\fill [color=black] (0.395,0.679) circle (1.5pt);
	\draw[color=black] (0.380,0.675) node {$D$};
	\fill [color=black] (0.414,0.669) circle (1.5pt);
	\draw[color=black] (0.418,0.655) node {$E$};
	\end{scriptsize}
	\end{tikzpicture}
\captionof{figure}{}
   \label{old}
\end{wrapfigure}

\textit{Proof.} Let $\varepsilon$ be the size of the shift and let $x$ be 
  the common length of the segments on the boundary of the original construction, 
  with centres at the origin. Let $T$ be the area of the hexagon $ABCDEF$, 
  where $EF = FA = x$, $AB = DE = \varepsilon / \sqrt{2}$, the angles at 
  $A$, $B$, $D$ and $E$ are right angles, and the angle at $C$ is 
  $108^\circ$ (Figure \ref{old}). Let $t$ be the area of the same hexagon, only this time the 
  length of the sides $BC$ and $CD$ will equal $x$. It is easy to see 
  that $t < T$. 
  
  As it is shown on Figure \ref{negyz}, let us use 
  the notations $P = A_1' A_2' \cap E_1'E_2'$, 
  $Q = E_1 E_2 \cap E_2'E_3'$, $R = A_1 A_2 \cap E_1E_2$, 
  $T = C_3 C_4 \cap B_3B_4$, $U = B_1 B_4 \cap B_3'B_4'$, 
  $V = C_3' C_4' \cap B_3'B_4'$. Because of the symmetry of the construction, the area added to the union 
  is exactly ten times the area of the polygon $A_1 A'_1 P E'_2 Q R$ which 
  is $t - \frac{\varepsilon^2}{4}$, since $A_1R = RE_2 = x$. 
  The area subtracted from the union is ten times the area of the 
  hexagon $C_3 C'_3 V U B_4 T$ which is $T - \frac{\varepsilon^2}{4}$, 
  because $C_3 T = TB_4 = x$. So the area subtracted is greater than the 
  area added, which finishes the proof. \qed

Hence the proof of the theorem is also complete.
\qed
 \begin{center} 
\definecolor{uuuuuu}{rgb}{0.26667,0.26667,0.26667}
\definecolor{zzttqq}{rgb}{0.6,0.2,0}
\definecolor{qqqqff}{rgb}{0,0,1}
\definecolor{ttffqq}{rgb}{0.2,1,0}
\definecolor{ffqqqq}{rgb}{1,0,0}
\begin{tikzpicture}[line cap=round,line join=round,>=triangle 45,x=6cm,y=6cm]
	\draw[->,color=black] (-0.74821,0) -- (0.83058,0);
	\foreach \x in {-0.6,-0.4,-0.2,0.2,0.4,0.6,0.8}
	\draw[shift={(\x,0)},color=black] (0pt,2pt) -- (0pt,-2pt) node[below] {\footnotesize $\x$};
	\draw[->,color=black] (0,-0.76199) -- (0,0.81515);
	\foreach \y in {-0.6,-0.4,-0.2,0.2,0.4,0.6,0.8}
	\draw[shift={(0,\y)},color=black] (2pt,0pt) -- (-2pt,0pt) node[left] {\footnotesize $\y$};
	\draw[color=black] (0pt,-10pt) node[right] {\footnotesize $0$};
	\clip(-0.74821,-0.76199) rectangle (0.83058,0.81515);

	\fill[color=ffqqqq,fill=ffqqqq,fill opacity=1.0] (0.70711,0) -- (0.73711,0) -- (0.63632,0.10078) -- (0.61043,0.09668) -- cycle;
\fill[color=ffqqqq,fill=ffqqqq,fill opacity=1.0] (0.63632,0.10078) -- (0.61043,0.09668) -- (0.66249,0.19887) -- (0.68177,0.18998) -- cycle;
	\fill[color=ttffqq,fill=ttffqq,fill opacity=1.0] (0.66361,-0.20106) -- (0.6725,-0.21851) -- (0.55067,-0.28058) -- (0.52731,-0.26868) -- cycle;
\fill[color=ttffqq,fill=ttffqq,fill opacity=1.0] (0.55067,-0.28058) -- (0.52731,-0.26868) -- (0.54779,-0.39799) -- (0.57206,-0.41563) -- cycle;

	\fill[color=zzttqq,fill=zzttqq,fill opacity=0.1] (0.70711,0) -- (0,0.70711) -- (-0.70711,0) -- (0,-0.70711) -- cycle;
	\fill[color=zzttqq,fill=zzttqq,fill opacity=0.1] (0.21851,0.6725) -- (-0.6725,0.21851) -- (-0.21851,-0.6725) -- (0.6725,-0.21851) -- cycle;
	\fill[color=zzttqq,fill=zzttqq,fill opacity=0.1] (-0.57206,0.41563) -- (-0.41563,-0.57206) -- (0.57206,-0.41563) -- (0.41563,0.57206) -- cycle;
	\fill[color=zzttqq,fill=zzttqq,fill opacity=0.1] (-0.57206,-0.41563) -- (0.41563,-0.57206) -- (0.57206,0.41563) -- (-0.41563,0.57206) -- cycle;
	\fill[color=zzttqq,fill=zzttqq,fill opacity=0.1] (0.21851,-0.6725) -- (0.6725,0.21851) -- (-0.21851,0.6725) -- (-0.6725,-0.21851) -- cycle;
	\fill[color=zzttqq,fill=zzttqq,fill opacity=0.1] (0.28058,-0.55067) -- (0.21851,-0.6725) -- (0.09668,-0.61043) -- (0,-0.70711) -- (-0.09668,-0.61042) -- (-0.21851,-0.6725) -- (-0.28058,-0.55067) -- (-0.41563,-0.57206) -- (-0.43702,-0.43702) -- (-0.57206,-0.41563) -- (-0.55067,-0.28058) -- (-0.6725,-0.21851) -- (-0.61042,-0.09668) -- (-0.70711,0) -- (-0.61042,0.09668) -- (-0.6725,0.21851) -- (-0.55067,0.28058) -- (-0.57206,0.41563) -- (-0.43702,0.43702) -- (-0.41563,0.57206) -- (-0.28058,0.55067) -- (-0.21851,0.6725) -- (-0.09668,0.61042) -- (0,0.70711) -- (0.09668,0.61043) -- (0.21851,0.6725) -- (0.28058,0.55067) -- (0.41563,0.57206) -- (0.43702,0.43702) -- (0.57206,0.41563) -- (0.55067,0.28058) -- (0.6725,0.21851) -- (0.61043,0.09668) -- (0.70711,0) -- (0.61043,-0.09668) -- (0.6725,-0.21851) -- (0.55067,-0.28058) -- (0.57206,-0.41563) -- (0.43702,-0.43702) -- (0.41563,-0.57206) -- (0.28058,-0.55067) -- cycle;
	\fill[color=zzttqq,fill=zzttqq,fill opacity=0.1] (0.73711,0) -- (0.03,0.70711) -- (-0.67711,0) -- (0.03,-0.70711) -- cycle;
	\fill[color=zzttqq,fill=zzttqq,fill opacity=0.1] (0.22778,0.70103) -- (-0.66323,0.24704) -- (-0.20924,-0.64397) -- (0.68177,-0.18998) -- cycle;
	\fill[color=zzttqq,fill=zzttqq,fill opacity=0.1] (-0.59633,0.43326) -- (-0.4399,-0.55443) -- (0.54779,-0.39799) -- (0.39136,0.58969) -- cycle;
	\fill[color=zzttqq,fill=zzttqq,fill opacity=0.1] (-0.59633,-0.43326) -- (0.39136,-0.58969) -- (0.54779,0.39799) -- (-0.4399,0.55443) -- cycle;
	\fill[color=zzttqq,fill=zzttqq,fill opacity=0.1] (0.22778,-0.70103) -- (0.68177,0.18998) -- (-0.20924,0.64397) -- (-0.66323,-0.24704) -- cycle;
	\fill[color=zzttqq,fill=zzttqq,fill opacity=0.1] (0.29249,-0.57404) -- (0.22778,-0.70103) -- (0.10078,-0.63632) -- (0.03,-0.70711) -- (-0.09258,-0.58453) -- (-0.20924,-0.64397) -- (-0.26868,-0.52731) -- (-0.4399,-0.55443) -- (-0.45556,-0.45556) -- (-0.59633,-0.43326) -- (-0.57404,-0.29249) -- (-0.66323,-0.24704) -- (-0.58453,-0.09258) -- (-0.67711,0) -- (-0.58453,0.09258) -- (-0.66323,0.24704) -- (-0.57404,0.29249) -- (-0.59633,0.43326) -- (-0.45556,0.45556) -- (-0.4399,0.55443) -- (-0.26868,0.52731) -- (-0.20924,0.64397) -- (-0.09258,0.58453) -- (0.03,0.70711) -- (0.10078,0.63632) -- (0.22778,0.70103) -- (0.29249,0.57404) -- (0.39136,0.58969) -- (0.41848,0.41848) -- (0.54779,0.39799) -- (0.52731,0.26868) -- (0.68177,0.18998) -- (0.63632,0.10078) -- (0.73711,0) -- (0.63632,-0.10078) -- (0.68177,-0.18998) -- (0.52731,-0.26868) -- (0.54779,-0.39799) -- (0.41848,-0.41848) -- (0.39136,-0.58969) -- (0.29249,-0.57404) -- cycle;
	\fill[color=zzttqq,fill=zzttqq,fill opacity=0.1] (0.29249,-0.57404) -- (0.22778,-0.70103) -- (0.10078,-0.63632) -- (0.03,-0.70711) -- (-0.09258,-0.58453) -- (-0.20924,-0.64397) -- (-0.26868,-0.52731) -- (-0.4399,-0.55443) -- (-0.45556,-0.45556) -- (-0.59633,-0.43326) -- (-0.57404,-0.29249) -- (-0.66323,-0.24704) -- (-0.58453,-0.09258) -- (-0.67711,0) -- (-0.58453,0.09258) -- (-0.66323,0.24704) -- (-0.57404,0.29249) -- (-0.59633,0.43326) -- (-0.45556,0.45556) -- (-0.4399,0.55443) -- (-0.26868,0.52731) -- (-0.20924,0.64397) -- (-0.09258,0.58453) -- (0.03,0.70711) -- (0.10078,0.63632) -- (0.22778,0.70103) -- (0.29249,0.57404) -- (0.39136,0.58969) -- (0.41848,0.41848) -- (0.54779,0.39799) -- (0.52731,0.26868) -- (0.68177,0.18998) -- (0.63632,0.10078) -- (0.73711,0) -- (0.63632,-0.10078) -- (0.68177,-0.18998) -- (0.52731,-0.26868) -- (0.54779,-0.39799) -- (0.41848,-0.41848) -- (0.39136,-0.58969) -- (0.29249,-0.57404) -- cycle;
	\fill[color=zzttqq,fill=zzttqq,fill opacity=0.1] (0.29249,-0.57404) -- (0.22778,-0.70103) -- (0.10078,-0.63632) -- (0.03,-0.70711) -- (-0.09258,-0.58453) -- (-0.20924,-0.64397) -- (-0.26868,-0.52731) -- (-0.4399,-0.55443) -- (-0.45556,-0.45556) -- (-0.59633,-0.43326) -- (-0.57404,-0.29249) -- (-0.66323,-0.24704) -- (-0.58453,-0.09258) -- (-0.67711,0) -- (-0.58453,0.09258) -- (-0.66323,0.24704) -- (-0.57404,0.29249) -- (-0.59633,0.43326) -- (-0.45556,0.45556) -- (-0.4399,0.55443) -- (-0.26868,0.52731) -- (-0.20924,0.64397) -- (-0.09258,0.58453) -- (0.03,0.70711) -- (0.10078,0.63632) -- (0.22778,0.70103) -- (0.29249,0.57404) -- (0.39136,0.58969) -- (0.41848,0.41848) -- (0.54779,0.39799) -- (0.52731,0.26868) -- (0.68177,0.18998) -- (0.63632,0.10078) -- (0.73711,0) -- (0.63632,-0.10078) -- (0.68177,-0.18998) -- (0.52731,-0.26868) -- (0.54779,-0.39799) -- (0.41848,-0.41848) -- (0.39136,-0.58969) -- (0.29249,-0.57404) -- cycle;
	\draw [color=zzttqq] (0.70711,0)-- (0,0.70711);
	\draw [color=zzttqq] (0,0.70711)-- (-0.70711,0);
	\draw [color=zzttqq] (-0.70711,0)-- (0,-0.70711);
	\draw [color=zzttqq] (0,-0.70711)-- (0.70711,0);
	\draw [color=zzttqq] (0.21851,0.6725)-- (-0.6725,0.21851);
	\draw [color=zzttqq] (-0.6725,0.21851)-- (-0.21851,-0.6725);
	\draw [color=zzttqq] (-0.21851,-0.6725)-- (0.6725,-0.21851);
	\draw [color=zzttqq] (0.6725,-0.21851)-- (0.21851,0.6725);
	\draw [color=zzttqq] (-0.57206,0.41563)-- (-0.41563,-0.57206);
	\draw [color=zzttqq] (-0.41563,-0.57206)-- (0.57206,-0.41563);
	\draw [color=zzttqq] (0.57206,-0.41563)-- (0.41563,0.57206);
	\draw [color=zzttqq] (0.41563,0.57206)-- (-0.57206,0.41563);
	\draw [color=zzttqq] (-0.57206,-0.41563)-- (0.41563,-0.57206);
	\draw [color=zzttqq] (0.41563,-0.57206)-- (0.57206,0.41563);
	\draw [color=zzttqq] (0.57206,0.41563)-- (-0.41563,0.57206);
	\draw [color=zzttqq] (-0.41563,0.57206)-- (-0.57206,-0.41563);
	\draw [color=zzttqq] (0.21851,-0.6725)-- (0.6725,0.21851);
	\draw [color=zzttqq] (0.6725,0.21851)-- (-0.21851,0.6725);
	\draw [color=zzttqq] (-0.21851,0.6725)-- (-0.6725,-0.21851);
	\draw [color=zzttqq] (-0.6725,-0.21851)-- (0.21851,-0.6725);
	\draw [color=zzttqq] (0.73711,0)-- (0.03,0.70711);
	\draw [color=zzttqq] (0.03,0.70711)-- (-0.67711,0);
	\draw [color=zzttqq] (-0.67711,0)-- (0.03,-0.70711);
	\draw [color=zzttqq] (0.03,-0.70711)-- (0.73711,0);
	\draw [color=zzttqq] (0.22778,0.70103)-- (-0.66323,0.24704);
	\draw [color=zzttqq] (-0.66323,0.24704)-- (-0.20924,-0.64397);
	\draw [color=zzttqq] (-0.20924,-0.64397)-- (0.68177,-0.18998);
	\draw [color=zzttqq] (0.68177,-0.18998)-- (0.22778,0.70103);
	\draw [color=zzttqq] (-0.59633,0.43326)-- (-0.4399,-0.55443);
	\draw [color=zzttqq] (-0.4399,-0.55443)-- (0.54779,-0.39799);
	\draw [color=zzttqq] (0.54779,-0.39799)-- (0.39136,0.58969);
	\draw [color=zzttqq] (0.39136,0.58969)-- (-0.59633,0.43326);
	\draw [color=zzttqq] (-0.59633,-0.43326)-- (0.39136,-0.58969);
	\draw [color=zzttqq] (0.39136,-0.58969)-- (0.54779,0.39799);
	\draw [color=zzttqq] (0.54779,0.39799)-- (-0.4399,0.55443);
	\draw [color=zzttqq] (-0.4399,0.55443)-- (-0.59633,-0.43326);
	\draw [color=zzttqq] (0.22778,-0.70103)-- (0.68177,0.18998);
	\draw [color=zzttqq] (0.68177,0.18998)-- (-0.20924,0.64397);
	\draw [color=zzttqq] (-0.20924,0.64397)-- (-0.66323,-0.24704);
	\draw [color=zzttqq] (-0.66323,-0.24704)-- (0.22778,-0.70103);
	\draw [color=zzttqq] (0.29249,-0.57404)-- (0.22778,-0.70103);
	\draw [color=zzttqq] (0.22778,-0.70103)-- (0.10078,-0.63632);
	\draw [color=zzttqq] (0.10078,-0.63632)-- (0.03,-0.70711);
	\draw [color=zzttqq] (0.03,-0.70711)-- (-0.09258,-0.58453);
	\draw [color=zzttqq] (-0.09258,-0.58453)-- (-0.20924,-0.64397);
	\draw [color=zzttqq] (-0.20924,-0.64397)-- (-0.26868,-0.52731);
	\draw [color=zzttqq] (-0.26868,-0.52731)-- (-0.4399,-0.55443);
	\draw [color=zzttqq] (-0.4399,-0.55443)-- (-0.45556,-0.45556);
	\draw [color=zzttqq] (-0.45556,-0.45556)-- (-0.59633,-0.43326);
	\draw [color=zzttqq] (-0.59633,-0.43326)-- (-0.57404,-0.29249);
	\draw [color=zzttqq] (-0.57404,-0.29249)-- (-0.66323,-0.24704);
	\draw [color=zzttqq] (-0.66323,-0.24704)-- (-0.58453,-0.09258);
	\draw [color=zzttqq] (-0.58453,-0.09258)-- (-0.67711,0);
	\draw [color=zzttqq] (-0.67711,0)-- (-0.58453,0.09258);
	\draw [color=zzttqq] (-0.58453,0.09258)-- (-0.66323,0.24704);
	\draw [color=zzttqq] (-0.66323,0.24704)-- (-0.57404,0.29249);
	\draw [color=zzttqq] (-0.57404,0.29249)-- (-0.59633,0.43326);
	\draw [color=zzttqq] (-0.59633,0.43326)-- (-0.45556,0.45556);
	\draw [color=zzttqq] (-0.45556,0.45556)-- (-0.4399,0.55443);
	\draw [color=zzttqq] (-0.4399,0.55443)-- (-0.26868,0.52731);
	\draw [color=zzttqq] (-0.26868,0.52731)-- (-0.20924,0.64397);
	\draw [color=zzttqq] (-0.20924,0.64397)-- (-0.09258,0.58453);
	\draw [color=zzttqq] (-0.09258,0.58453)-- (0.03,0.70711);
	\draw [color=zzttqq] (0.03,0.70711)-- (0.10078,0.63632);
	\draw [color=zzttqq] (0.10078,0.63632)-- (0.22778,0.70103);
	\draw [color=zzttqq] (0.22778,0.70103)-- (0.29249,0.57404);
	\draw [color=zzttqq] (0.29249,0.57404)-- (0.39136,0.58969);
	\draw [color=zzttqq] (0.39136,0.58969)-- (0.41848,0.41848);
	\draw [color=zzttqq] (0.41848,0.41848)-- (0.54779,0.39799);
	\draw [color=zzttqq] (0.54779,0.39799)-- (0.52731,0.26868);
	\draw [color=zzttqq] (0.52731,0.26868)-- (0.68177,0.18998);
	\draw [color=zzttqq] (0.68177,0.18998)-- (0.63632,0.10078);
	\draw [color=zzttqq] (0.63632,0.10078)-- (0.73711,0);
	\draw [color=zzttqq] (0.73711,0)-- (0.63632,-0.10078);
	\draw [color=zzttqq] (0.63632,-0.10078)-- (0.68177,-0.18998);
	\draw [color=zzttqq] (0.68177,-0.18998)-- (0.52731,-0.26868);
	\draw [color=zzttqq] (0.52731,-0.26868)-- (0.54779,-0.39799);
	\draw [color=zzttqq] (0.54779,-0.39799)-- (0.41848,-0.41848);
	\draw [color=zzttqq] (0.41848,-0.41848)-- (0.39136,-0.58969);
	\draw [color=zzttqq] (0.39136,-0.58969)-- (0.29249,-0.57404);
	\draw [color=zzttqq] (0.29249,-0.57404)-- (0.29249,-0.57404);
	\draw [color=zzttqq] (0.29249,-0.57404)-- (0.22778,-0.70103);
	\draw [color=zzttqq] (0.22778,-0.70103)-- (0.10078,-0.63632);
	\draw [color=zzttqq] (0.10078,-0.63632)-- (0.03,-0.70711);
	\draw [color=zzttqq] (0.03,-0.70711)-- (-0.09258,-0.58453);
	\draw [color=zzttqq] (-0.09258,-0.58453)-- (-0.20924,-0.64397);
	\draw [color=zzttqq] (-0.20924,-0.64397)-- (-0.26868,-0.52731);
	\draw [color=zzttqq] (-0.26868,-0.52731)-- (-0.4399,-0.55443);
	\draw [color=zzttqq] (-0.4399,-0.55443)-- (-0.45556,-0.45556);
	\draw [color=zzttqq] (-0.45556,-0.45556)-- (-0.59633,-0.43326);
	\draw [color=zzttqq] (-0.59633,-0.43326)-- (-0.57404,-0.29249);
	\draw [color=zzttqq] (-0.57404,-0.29249)-- (-0.66323,-0.24704);
	\draw [color=zzttqq] (-0.66323,-0.24704)-- (-0.58453,-0.09258);
	\draw [color=zzttqq] (-0.58453,-0.09258)-- (-0.67711,0);
	\draw [color=zzttqq] (-0.67711,0)-- (-0.58453,0.09258);
	\draw [color=zzttqq] (-0.58453,0.09258)-- (-0.66323,0.24704);
	\draw [color=zzttqq] (-0.66323,0.24704)-- (-0.57404,0.29249);
	\draw [color=zzttqq] (-0.57404,0.29249)-- (-0.59633,0.43326);
	\draw [color=zzttqq] (-0.59633,0.43326)-- (-0.45556,0.45556);
	\draw [color=zzttqq] (-0.45556,0.45556)-- (-0.4399,0.55443);
	\draw [color=zzttqq] (-0.4399,0.55443)-- (-0.26868,0.52731);
	\draw [color=zzttqq] (-0.26868,0.52731)-- (-0.20924,0.64397);
	\draw [color=zzttqq] (-0.20924,0.64397)-- (-0.09258,0.58453);
	\draw [color=zzttqq] (-0.09258,0.58453)-- (0.03,0.70711);
	\draw [color=zzttqq] (0.03,0.70711)-- (0.10078,0.63632);
	\draw [color=zzttqq] (0.10078,0.63632)-- (0.22778,0.70103);
	\draw [color=zzttqq] (0.22778,0.70103)-- (0.29249,0.57404);
	\draw [color=zzttqq] (0.29249,0.57404)-- (0.39136,0.58969);
	\draw [color=zzttqq] (0.39136,0.58969)-- (0.41848,0.41848);
	\draw [color=zzttqq] (0.41848,0.41848)-- (0.54779,0.39799);
	\draw [color=zzttqq] (0.54779,0.39799)-- (0.52731,0.26868);
	\draw [color=zzttqq] (0.52731,0.26868)-- (0.68177,0.18998);
	\draw [color=zzttqq] (0.68177,0.18998)-- (0.63632,0.10078);
	\draw [color=zzttqq] (0.63632,0.10078)-- (0.73711,0);
	\draw [color=zzttqq] (0.73711,0)-- (0.63632,-0.10078);
	\draw [color=zzttqq] (0.63632,-0.10078)-- (0.68177,-0.18998);
	\draw [color=zzttqq] (0.68177,-0.18998)-- (0.52731,-0.26868);
	\draw [color=zzttqq] (0.52731,-0.26868)-- (0.54779,-0.39799);
	\draw [color=zzttqq] (0.54779,-0.39799)-- (0.41848,-0.41848);
	\draw [color=zzttqq] (0.41848,-0.41848)-- (0.39136,-0.58969);
	\draw [color=zzttqq] (0.39136,-0.58969)-- (0.29249,-0.57404);
	\draw [color=zzttqq] (0.29249,-0.57404)-- (0.29249,-0.57404);
	\draw [color=zzttqq] (0.29249,-0.57404)-- (0.22778,-0.70103);
	\draw [color=zzttqq] (0.22778,-0.70103)-- (0.10078,-0.63632);
	\draw [color=zzttqq] (0.10078,-0.63632)-- (0.03,-0.70711);
	\draw [color=zzttqq] (0.03,-0.70711)-- (-0.09258,-0.58453);
	\draw [color=zzttqq] (-0.09258,-0.58453)-- (-0.20924,-0.64397);
	\draw [color=zzttqq] (-0.20924,-0.64397)-- (-0.26868,-0.52731);
	\draw [color=zzttqq] (-0.26868,-0.52731)-- (-0.4399,-0.55443);
	\draw [color=zzttqq] (-0.4399,-0.55443)-- (-0.45556,-0.45556);
	\draw [color=zzttqq] (-0.45556,-0.45556)-- (-0.59633,-0.43326);
	\draw [color=zzttqq] (-0.59633,-0.43326)-- (-0.57404,-0.29249);
	\draw [color=zzttqq] (-0.57404,-0.29249)-- (-0.66323,-0.24704);
	\draw [color=zzttqq] (-0.66323,-0.24704)-- (-0.58453,-0.09258);
	\draw [color=zzttqq] (-0.58453,-0.09258)-- (-0.67711,0);
	\draw [color=zzttqq] (-0.67711,0)-- (-0.58453,0.09258);
	\draw [color=zzttqq] (-0.58453,0.09258)-- (-0.66323,0.24704);
	\draw [color=zzttqq] (-0.66323,0.24704)-- (-0.57404,0.29249);
	\draw [color=zzttqq] (-0.57404,0.29249)-- (-0.59633,0.43326);
	\draw [color=zzttqq] (-0.59633,0.43326)-- (-0.45556,0.45556);
	\draw [color=zzttqq] (-0.45556,0.45556)-- (-0.4399,0.55443);
	\draw [color=zzttqq] (-0.4399,0.55443)-- (-0.26868,0.52731);
	\draw [color=zzttqq] (-0.26868,0.52731)-- (-0.20924,0.64397);
	\draw [color=zzttqq] (-0.20924,0.64397)-- (-0.09258,0.58453);
	\draw [color=zzttqq] (-0.09258,0.58453)-- (0.03,0.70711);
	\draw [color=zzttqq] (0.03,0.70711)-- (0.10078,0.63632);
	\draw [color=zzttqq] (0.10078,0.63632)-- (0.22778,0.70103);
	\draw [color=zzttqq] (0.22778,0.70103)-- (0.29249,0.57404);
	\draw [color=zzttqq] (0.29249,0.57404)-- (0.39136,0.58969);
	\draw [color=zzttqq] (0.39136,0.58969)-- (0.41848,0.41848);
	\draw [color=zzttqq] (0.41848,0.41848)-- (0.54779,0.39799);
	\draw [color=zzttqq] (0.54779,0.39799)-- (0.52731,0.26868);
	\draw [color=zzttqq] (0.52731,0.26868)-- (0.68177,0.18998);
	\draw [color=zzttqq] (0.68177,0.18998)-- (0.63632,0.10078);
	\draw [color=zzttqq] (0.63632,0.10078)-- (0.73711,0);
	\draw [color=zzttqq] (0.73711,0)-- (0.63632,-0.10078);
	\draw [color=zzttqq] (0.63632,-0.10078)-- (0.68177,-0.18998);
	\draw [color=zzttqq] (0.68177,-0.18998)-- (0.52731,-0.26868);
	\draw [color=zzttqq] (0.52731,-0.26868)-- (0.54779,-0.39799);
	\draw [color=zzttqq] (0.54779,-0.39799)-- (0.41848,-0.41848);
	\draw [color=zzttqq] (0.41848,-0.41848)-- (0.39136,-0.58969);
	\draw [color=zzttqq] (0.39136,-0.58969)-- (0.29249,-0.57404);
	\draw [color=zzttqq] (0.29249,-0.57404)-- (0.29249,-0.57404);
	\begin{scriptsize}
	\draw [fill=black] (0.70711,0) circle (1.5pt);
	\draw[color=black] (0.67,0.015) node {$A_1$};
	\draw [fill=black] (0,0.70711) circle (1.5pt);
	\draw[color=black] (-0.035,0.70976) node {$A_2$};
	\draw [fill=black] (-0.70711,0) circle (1.5pt);
	\draw[color=black] (-0.72,0.026) node {$A_3$};
	\draw [fill=black] (0,-0.70711) circle (1.5pt);
	\draw[color=black] (-0.035,-0.70976) node {$A_4$};
	\draw [fill=black] (0.21851,0.6725) circle (1.5pt);
	\draw[color=black] (0.216,0.64) node {$B_1$};
	\draw [fill=black] (-0.6725,0.21851) circle (1.5pt);
	\draw[color=black] (-0.7,0.205) node {$B_2$};
	\draw [fill=black] (-0.21851,-0.6725) circle (1.5pt);
	\draw[color=black] (-0.19169,-0.7) node {$B_3$};
	\draw [fill=black] (0.6725,-0.21851) circle (1.5pt);
	\draw[color=black] (0.70087,-0.24) node {$B_4$};
	\draw [fill=black] (-0.57206,0.41563) circle (1.5pt);
	\draw[color=black] (-0.54,0.4) node {$C_1$};
	\draw [fill=black] (-0.41563,-0.57206) circle (1.5pt);
	\draw[color=black] (-0.38899,-0.595) node {$C_2$};
	\draw [fill=black] (0.57206,-0.41563) circle (1.5pt);
	\draw[color=black] (0.61,-0.425) node {$C_3$};
	\draw [fill=black] (0.41563,0.57206) circle (1.5pt);
	\draw[color=black] (0.455,0.56) node {$C_4$};
	\draw [fill=black] (-0.57206,-0.41563) circle (1.5pt);1	\draw[color=black] (-0.54245,-0.395) node {$D_1$};
	\draw [fill=black] (0.41563,-0.57206) circle (1.5pt);
	\draw[color=black] (0.455,-0.579) node {$D_2$};
	\draw [fill=black] (0.57206,0.41563) circle (1.5pt);
	\draw[color=black] (0.60065,0.44) node {$D_3$};
	\draw [fill=black] (-0.41563,0.57206) circle (1.5pt);
	\draw[color=black] (-0.38899,0.6) node {$D_4$};
	\draw [fill=black] (0.21851,-0.6725) circle (1.5pt);
	\draw[color=black] (0.21,-0.62) node {$E_1$};
	\draw [fill=black] (0.6725,0.21851) circle (1.5pt);
	\draw[color=black] (0.71,0.24) node {$E_2$};
	\draw [fill=black] (-0.21851,0.6725) circle (1.5pt);
	\draw[color=black] (-0.19169,0.6976) node {$E_3$};
	\draw [fill=black] (-0.6725,-0.21851) circle (1.5pt);
	\draw[color=black] (-0.69,-0.185) node {$E_4$};
	\draw [fill=black] (0.41563,-0.57206) circle (1.5pt);
	\draw [fill=black] (0.73711,0) circle (1.5pt);
	\draw[color=black] (0.762,0.025) node {$A'_1$};
	\draw [fill=black] (0.03,0.70711) circle (1.5pt);
	\draw[color=black] (0.06825,0.70976) node {$A'_2$};
	\draw [fill=black] (-0.67711,0) circle (1.5pt);
	\draw[color=black] (-0.63954,0.01209) node {$A'_3$};
	\draw [fill=black] (0.03,-0.70711) circle (1.5pt);
	\draw[color=black] (0.06825,-0.70976) node {$A'_4$};
	\draw [fill=black] (0.22778,0.70103) circle (1.5pt);
	\draw[color=black] (0.26555,0.70976) node {$B'_1$};
	\draw [fill=black] (-0.66323,0.24704) circle (1.5pt);
	\draw[color=black] (-0.7,0.27131) node {$B'_2$};
	\draw [fill=black] (-0.20924,-0.64397) circle (1.5pt);
	\draw[color=black] (-0.19169,-0.6) node {$B'_3$};
	\draw [fill=black] (0.68177,-0.18998) circle (1.5pt);
	\draw[color=black] (0.7,-0.1582) node {$B'_4$};
	\draw [fill=black] (-0.59633,0.43326) circle (1.5pt);
	\draw[color=black] (-0.63,0.45) node {$C'_1$};
	\draw [fill=black] (-0.4399,-0.55443) circle (1.5pt);
	\draw[color=black] (-0.475,-0.545) node {$C'_2$};
	\draw [fill=black] (0.54779,-0.39799) circle (1.5pt);
	\draw[color=black] (0.52,-0.365) node {$C'_3$};
	\draw [fill=black] (0.39136,0.58969) circle (1.5pt);
	\draw[color=black] (0.38,0.625) node {$C'_4$};
	\draw [fill=black] (0.52731,-0.26868) circle (1.5pt);
	\draw[color=black] (0.504,-0.26) node {$V$};
	\draw [fill=black] (-0.59633,-0.43326) circle (1.5pt);
	\draw[color=black] (-0.618,-0.468) node {$D'_1$};
	\draw [fill=black] (0.39136,-0.58969) circle (1.5pt);
	\draw[color=black] (0.38,-0.63) node {$D'_2$};
	\draw [fill=black] (0.54779,0.39799) circle (1.5pt);
	\draw[color=black] (0.51,0.37) node {$D'_3$};
	\draw [fill=black] (-0.4399,0.55443) circle (1.5pt);
	\draw[color=black] (-0.48,0.57) node {$D'_4$};
	\draw [fill=black] (0.22778,-0.70103) circle (1.5pt);
	\draw[color=black] (0.26555,-0.7) node {$E'_1$};
	\draw [fill=black] (0.68177,0.18998) circle (1.5pt);
	\draw[color=black] (0.715,0.175) node {$E'_2$};
	\draw [fill=black] (-0.20924,0.64397) circle (1.5pt);
	\draw[color=black] (-0.195,0.605) node {$E'_3$};
	\draw [fill=black] (-0.66323,-0.24704) circle (1.5pt);
	\draw[color=black] (-0.69,-0.28) node {$E'_4$};
	\draw [fill=black] (0.66249,0.19887) circle (1.5pt);
	\draw[color=black] (0.63,0.19) node {$Q$};
	\draw [fill=black] (0.61043,0.09668) circle (1.5pt);
	\draw[color=black] (0.575,0.1) node {$R$};
	\draw [fill=black] (0.63632,0.10078) circle (1.5pt);
	\draw[color=black] (0.66,0.11) node {$P$};
	\draw [fill=black] (0.55067,-0.28058) circle (1.5pt);
	\draw[color=black] (0.57247,-0.3) node {$T$};
	\draw [fill=black] (0.66361,-0.20106) circle (1.5pt);
	\draw[color=black] (0.638,-0.19) node {$U$};
	\end{scriptsize}
\end{tikzpicture}

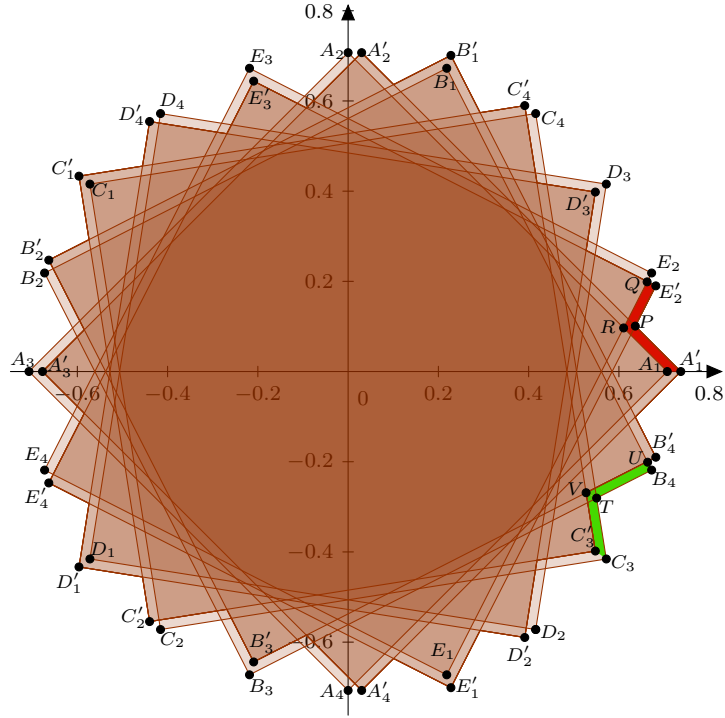
\captionof{figure}{The shifted $(5,4)$-setup}
   \label{negyz}
\end{center}

\subsection{A counterexample of 4 regular triangles}
\begin{theorem}
There exists a polygon that is the union of $4$ regular triangles with side length $1$ for which the perimeter-to-area ratio is greater then $4\sqrt{3}$ (which is the perimeter-to-area ratio of a single triangle).
\end{theorem}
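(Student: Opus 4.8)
The plan is to imitate the five-squares argument verbatim, replacing the basic $(5,4)$-setup by a basic $(4,3)$-setup. Take four equilateral triangles of unit side, all centred at $O$, whose twelve vertices form a regular $12$-gon; by Lemma \ref{common_centre} the perimeter-to-area ratio of their union equals that of a single triangle, namely $4\sqrt{3}$. I would then mark one vertex of each triangle so that the four marked vertices lie in the directions $0^\circ,90^\circ,180^\circ,270^\circ$ (one vertex from each of the four triangles, since their vertices occupy all multiples of $30^\circ$), forming a square centred at $O$. Passing to the shifted $(4,3)$-setup means translating each triangle by $\varepsilon\cdot\overrightarrow{OA_i}$ toward its own marked vertex; for small $\varepsilon$ these are pattern-preserving regular translations, so the boundary of the union stays a simple polygon with the same combinatorial pattern. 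As for squares, the goal is to show the perimeter is unchanged while the area strictly decreases, which forces the ratio above $4\sqrt{3}$.

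For the perimeter I would shift the triangles one at a time and argue locally at each vertex of the moving triangle. The centre-to-vertex direction bisects the $60^\circ$ angle of the triangle, hence makes $30^\circ$ with each incident side, so shifting by $\varepsilon$ changes the perpendicular offset of those two sides by $\varepsilon\sin 30^\circ=\varepsilon/2$ (the role played by $\varepsilon/\sqrt{2}$ in the square case), while leaving the offset of the opposite side unchanged. As in Figures \ref{perim} and \ref{perim2}, each vertex of the moving triangle produces two small triangles determined by the shift vector together with the two adjacent boundary segments; their angles are fixed and come from the $60^\circ$ and $30^\circ$ above, the crossing angle $360^\circ/12=30^\circ$ of two boundary chords belonging to neighbouring triangles, and the interior boundary angle $180^\circ-360^\circ/4=90^\circ$ at the crossing vertices (which plays the role of the pentagon angle $108^\circ=180^\circ-360^\circ/5$ in the square case). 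Computing the resulting lengthenings and shortenings and checking, vertex against its symmetric partner, that they cancel exactly, gives that the total perimeter is preserved; by pattern-preservation and symmetry the four shifts contribute identically.

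For the area I would compare, region by region, what the shift adds to the union against what it removes, exactly as in Figure \ref{old}. The added and the removed pieces are hexagons of the same shape, differing only in which pair of opposite sides carries the short length $\varepsilon/2$ and which carries the common chord length $x$; an elementary inequality of the form $t<T$ (the hexagon whose long sides sit in the ``outer'' position has strictly larger area) shows that each removed piece exceeds the corresponding added piece. Multiplying by the $4$-fold symmetry of the configuration gives that the total removed area strictly exceeds the total added area, so $a\!\left(\bigcup_i P_i\right)$ strictly decreases. Together with the invariance of the perimeter this exhibits a union of four unit equilateral triangles whose ratio is strictly greater than $4\sqrt{3}$.

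The main obstacle is geometric verification rather than any new idea: I must pin down the exact directions and angles of the boundary segments of the basic $(4,3)$-setup, confirm that marking the vertices at $0^\circ,90^\circ,180^\circ,270^\circ$ really yields pattern-preserving regular shifts keeping the boundary a simple polygon, and—most delicately—check that the increments cancel \emph{exactly} in the perimeter step and that the hexagon inequality $t<T$ points in the area-\emph{decreasing} direction. Should the most symmetric marking fail to preserve the pattern or happen to increase the area, I would adjust which vertex of each triangle is marked, or the order and sign of the shifts, until both the exact perimeter cancellation and the strict inequality $t<T$ hold simultaneously.
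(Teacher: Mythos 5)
Your plan is not the paper's proof, and it breaks at the area step. Start with a geometric slip that infects everything downstream: you assert that shifting a triangle by $\varepsilon$ towards a vertex leaves ``the offset of the opposite side unchanged.'' In an equilateral triangle the centre-to-vertex direction is \emph{perpendicular} to the opposite side, so that side's supporting line moves by the full $\varepsilon$ (inward), while the two incident sides move outward by $\varepsilon/2$. Consequently the removed pieces are strips of width $\varepsilon$ and the added pieces are strips of width $\varepsilon/2$: unlike the square case (where the central symmetry of the square puts a vertex opposite each vertex and makes all four sides move by $\varepsilon/\sqrt{2}$), the gained and lost pieces are \emph{not} ``hexagons of the same shape differing only in which sides carry $x$,'' and no analogue of the $t<T$ comparison of Figure \ref{old} is available. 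Worse for your strategy, the paper proves (its second lemma in the triangle section) that translating a \emph{single} triangle of the basic $(4,3)$-setup leaves the area \emph{exactly} unchanged --- the added and removed trapezoids cancel identically (this is the content of Figure \ref{ter}). So the decrease you need cannot be obtained from any direct added-versus-removed pairing of the kind you describe; it is invisible at that level of the computation.

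The missing idea, which is the actual core of the paper's proof, is that the area decrease for triangles is a second-order \emph{interaction} effect between two shifts. The paper does not use the shifted $(4,3)$-setup at all: it translates only \emph{two neighbouring} triangles, in perpendicular directions, and shows (third lemma) that although the first translation is area-neutral, it modifies the segments bounding the regions swept by the second translation in exactly the way that makes the second translation strictly area-decreasing; combined with the perimeter lemma (your perimeter step is essentially that lemma, and its conclusion is correct even though your ``unchanged offset'' premise is not) and Lemma \ref{common_centre}, the theorem follows. Note also that the statement you are actually attempting --- that the full shifted $(4,3)$-setup has ratio greater than $4\sqrt{3}$ --- is precisely the $n=3$ instance of the paper's first Conjecture, which the authors leave open; so your route, even repaired, would require proving something the paper deliberately avoids, presumably by iterating the interaction argument over all four consecutive shifts rather than by the one-shot hexagon comparison you propose.
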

\begin{proof}
The idea is the similar as before, we start with a basic $(4,3)$-setup. In this case the calculation is particularly convenient if we translate solely two of the triangles, since the translation will not change the perimeter of the union, however the area will decrease.

For the sake of exactness we prove the theorem through three easy lemmas.

\begin{lemma}
Suppose that we have 4 triangles obtained by pattern preserving translations of the basic setup. Then if we apply a pattern preserving regular translation to one of the triangles then it does not change the perimeter of the union.
\end{lemma}
\begin{proof}
Without loss of generality we can assume that we translate a triangle to the positive direction of the $y$ axis, as in Figure \ref{ker}. 

\definecolor{ccqqcc}{rgb}{0.8,0,0.8}
\definecolor{uququq}{rgb}{0.25098,0.25098,0.25098}
\definecolor{qqcctt}{rgb}{0,0.8,0.2}
\definecolor{qqqqff}{rgb}{0,0,1}
\definecolor{zzttqq}{rgb}{0.6,0.2,0}

\begin{tikzpicture}[line cap=round,line join=round,>=triangle 45,x=7.0cm,y=7.0cm]
\clip(-0.88885,-0.31678) rectangle (0.88127,0.74378);
\fill[color=zzttqq,fill=zzttqq,fill opacity=0.25] (0.59735,0) -- (-0.26868,0.5) -- (-0.26868,-0.5) -- cycle;
\fill[color=zzttqq,fill=zzttqq,fill opacity=0.2] (-0.57735,0) -- (0.28868,-0.5) -- (0.28868,0.5) -- cycle;
\fill[color=zzttqq,fill=zzttqq,fill opacity=0.25] (0,-0.57735) -- (0.5,0.28868) -- (-0.5,0.28868) -- cycle;
\fill[color=zzttqq,fill=zzttqq,fill opacity=0.1] (-0.5,-0.18868) -- (0.5,-0.18868) -- (0,0.67735) -- cycle;
\draw [color=zzttqq] (0.59735,0)-- (-0.26868,0.5);
\draw [color=zzttqq] (-0.26868,0.5)-- (-0.26868,-0.5);
\draw [color=zzttqq] (-0.26868,-0.5)-- (0.59735,0);
\draw [color=qqcctt] (0,0.59735)-- (-0.5,-0.26868);
\draw [color=qqcctt] (-0.5,-0.26868)-- (0.5,-0.26868);
\draw [color=qqcctt] (0.5,-0.26868)-- (0,0.59735);
\draw [color=zzttqq] (-0.57735,0)-- (0.28868,-0.5);
\draw [color=zzttqq] (0.28868,-0.5)-- (0.28868,0.5);
\draw [color=zzttqq] (0.28868,0.5)-- (-0.57735,0);
\draw [color=zzttqq] (0,-0.57735)-- (0.5,0.28868);
\draw [color=zzttqq] (0.5,0.28868)-- (-0.5,0.28868);
\draw [color=zzttqq] (-0.5,0.28868)-- (0,-0.57735);
\draw [color=zzttqq] (-0.5,-0.18868)-- (0.5,-0.18868);
\draw [color=zzttqq] (0.5,-0.18868)-- (0,0.67735);
\draw [color=zzttqq] (0,0.67735)-- (-0.5,-0.18868);
\draw [line width=1.2pt] (0.408,-0.10932)-- (0.44264,-0.08932);
\draw [line width=1.2pt] (0.11432,0.39934)-- (0.14896,0.41934);
\draw [line width=1.2pt,color=ccqqcc] (0,0.67735)-- (-0.03464,0.61735);
\draw [line width=1.2pt,color=ccqqcc] (0,0.67735)-- (0.03464,0.61735);
\draw [line width=1.2pt] (-0.14396,0.428)-- (-0.10932,0.408);
\draw [line width=1.2pt] (-0.43764,-0.08066)-- (-0.403,-0.10066);
\draw [line width=1.2pt] (-0.46536,-0.20868)-- (-0.5,-0.26868);
\draw [line width=1.2pt] (0.46536,-0.20868)-- (0.5,-0.26868);
\draw [line width=1.2pt] (-0.5,-0.18868)-- (-0.46536,-0.20868);
\draw [line width=1.2pt,color=ccqqcc] (-0.26868,-0.18868)-- (-0.26868,-0.26868);
\draw [line width=1.2pt,color=ccqqcc] (0.28868,-0.18868)-- (0.28868,-0.26868);
\draw [line width=1.2pt,color=ccqqcc] (-0.5,-0.18868)-- (-0.5,-0.26868);
\begin{scriptsize}
\fill [color=black] (0,0.59735) circle (1pt);
\draw[color=black] (0.0094,0.6154) node {$B_0$};
\fill [color=black] (-0.5,-0.26868) circle (1pt);
\draw[color=black] (-0.53,-0.28) node {$B_1$};
\fill [color=black] (0.5,-0.26868) circle (1pt);
\draw[color=black] (0.53,-0.28) node {$B_2$};
\fill [color=black] (0,0.67735) circle (1pt);
\draw[color=black] (0.0094,0.697) node {$B'_0$};
\fill [color=black] (-0.5,-0.18868) circle (1pt);
\draw[color=black] (-0.53,-0.17327) node {$B'_1$};
\fill [color=black] (0.5,-0.18868) circle (1pt);
\draw[color=black] (0.53,-0.17327) node {$B'_2$};
\fill [color=black] (-0.46536,-0.20868) circle (1pt);
\draw[color=black] (-0.435,-0.21) node {$I_4$};
\fill [color=black] (0.46536,-0.20868) circle (1pt);
\draw[color=black] (0.435,-0.21) node {$I_5$};
\fill [color=black] (-0.03464,0.61735) circle (1pt);
\draw[color=black] (-0.06,0.63294) node {$J_0$};
\fill [color=black] (0.03464,0.61735) circle (1pt);
\draw[color=black] (0.059,0.63294) node {$J_1$};
\fill [color=black] (-0.14396,0.428) circle (1pt);
\draw[color=black] (-0.152,0.46) node {$I'_0$};
\fill [color=black] (-0.10932,0.408) circle (1pt);
\draw[color=black] (-0.07,0.41) node {$I_0$};
\fill [color=black] (-0.43764,-0.08066) circle (1pt);
\draw[color=black] (-0.47,-0.09) node {$I'_2$};
\fill [color=black] (-0.403,-0.10066) circle (1pt);
\draw[color=black] (-0.38796,-0.13) node {$I_2$};
\fill [color=black] (0.14896,0.41934) circle (1pt);
\draw[color=black] (0.155,0.46) node {$I'_1$};
\fill [color=black] (0.11432,0.39934) circle (1pt);
\draw[color=black] (0.085,0.41) node {$I_1$};
\fill [color=black] (0.44264,-0.08932) circle (1pt);
\draw[color=black] (0.479,-0.09) node {$I'_3$};
\fill [color=black] (0.408,-0.10932) circle (1pt);
\draw[color=black] (0.405,-0.13) node {$I_3$};
\fill [color=black] (-0.26868,-0.18868) circle (1pt);
\draw[color=black] (-0.29,-0.21) node {$J'_2$};
\fill [color=black] (-0.26868,-0.26868) circle (1pt);
\draw[color=black] (-0.29,-0.29) node {$J_2$};
\fill [color=black] (0.28868,-0.18868) circle (1pt);
\draw[color=black] (0.31,-0.21) node {$J'_3$};
\fill [color=black] (0.28868,-0.26868) circle (1pt);
\draw[color=black] (0.31,-0.29) node {$J_3$};

\end{scriptsize}
\end{tikzpicture}

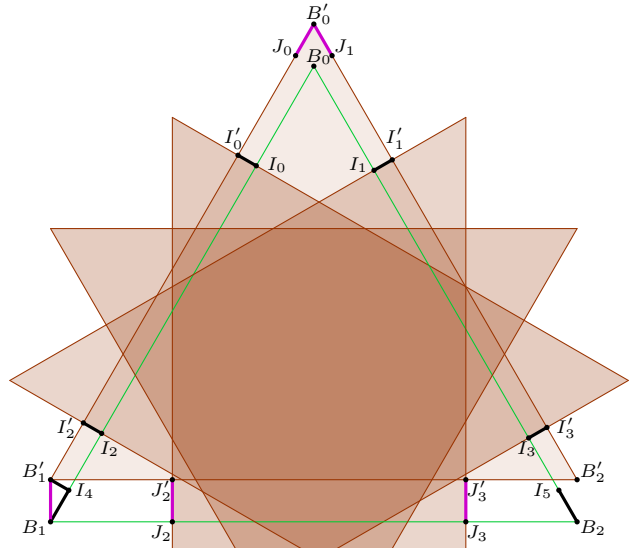
\captionof{figure}{Calculation of the perimeter}
   \label{ker}

Clearly, the perimeter of the union decreases with the length of $I_0I'_0,\dots,I_3I'_3$ and $I_4B_1$ and $I_5B_2$, where $I_4$ and $I_5$ are assigned so that the triangles $B'_1I_4B_1$ and $B'_2I_5B_2$ are right-angled.
The increase of the perimeter is equal to the sum of the lengths of $J_0B'_0$, $J_1B'_0$ and $J_2J'_2,J_3J'_3$. 

Now notice that the triangle $B'_1I_4B_1$ has angles $60^\circ,90^\circ,30^\circ$ respectively and $|B'_1I_4|=|I_0I'_0|=|I_2I'_2|$, 
and $|B'_1B_1|=|J_2J'_2|$. Therefore $|J_2J'_2|=|I_0I'_0|+|I_2I'_2|$. The triangles $B'_1I_4B_1$ and $B_0J_0B'_0$ are congruent, so $|I_4B_1|=|J_0B'_0|$. Similarly $|J_3J'_3|=|I_1I'_1|+|I_3I'_3|$ and $|I_5B_2|=|J_1B'_0|$.

Thus, the increase equals to the decrease so the translation does not change the perimeter.
   \end{proof}
\begin{lemma}
Suppose that we have 4 triangles in the basic $(4,3)$-setup. If we apply a pattern preserving regular translation to one of the triangles then it does not change the area of the union.
\end{lemma}
\begin{proof}
Again, without loss of generality we can assume that we have translated the triangle $B_0B_1B_2$ with the vector $(0,\varepsilon)$ and we obtain the triangle $B'_0B'_1B'_2$ as shown in Figure \ref{ter}. According to the notation of the figure, it is clear that the change in the area is equal to $2(a(B_0B'_0I'_0I_0)+a(I'_2I_2L_1B'_1)-a(L_1J'_2J_2B_1))$ (i.e., the area of the red and pink figures is added and the area of the green figures is subtracted). Let us denote $|B_1J_2|$ by $d$. Now we calculate this value. 

Clearly, \[a(L_1J'_2J_2B_1)=\frac{d+d-\varepsilon/\sqrt{3}}{2}\cdot\varepsilon\]
and by symmetry $|B_1I_2|=|B_0I_0|=d$ and we have $|I'_2I_2|=|I'_0I_0|=\varepsilon/2$.

\[a(B_0B'_0I'_0I_0)=\frac{d+d+(\sqrt{3}/2)\varepsilon}{2}\cdot\frac{\varepsilon}{2}\]

and 
\[a(I'_2I_2L_1B'_1)=\frac{d-(2/\sqrt{3})\varepsilon+d-\varepsilon/(2\sqrt{3})}{2}\cdot\frac{\varepsilon}{2}.\]

Adding up these equalities gives the lemma.

\end{proof}

\definecolor{ccqqcc}{rgb}{0.8,0,0.8}
\definecolor{qqqqcc}{rgb}{0,0,0.8}
\definecolor{ttffqq}{rgb}{0.2,1,0}
\definecolor{ffqqqq}{rgb}{1,0,0}
\definecolor{uququq}{rgb}{0.25098,0.25098,0.25098}
\definecolor{zzttqq}{rgb}{0.6,0.2,0}
\definecolor{qqcctt}{rgb}{0,0.8,0.2}
\definecolor{ffttqq}{rgb}{1,0.2,0}
\definecolor{qqqqff}{rgb}{0,0,1}
\begin{tikzpicture}[line cap=round,line join=round,>=triangle 45,x=8.0cm,y=8.0cm]

\clip(-0.77,-0.33) rectangle (0.77,0.6573);
\fill[color=zzttqq,fill=zzttqq,fill opacity=0.2] (-0.57735,0) -- (0.28868,-0.5) -- (0.28868,0.5) -- cycle;
\fill[color=zzttqq,fill=zzttqq,fill opacity=0.25] (0,-0.57735) -- (0.5,0.28868) -- (-0.5,0.28868) -- cycle;
\fill[color=zzttqq,fill=zzttqq,fill opacity=0.1] (-0.5,-0.2287) -- (0.5,-0.2287) -- (0,0.6374) -- cycle;
\fill[color=ffqqqq,fill=ffqqqq,fill opacity=1.0] (-0.10568,0.39435) -- (-0.13165,0.40935) -- (0,0.6374) -- (0,0.5774) -- cycle;
\fill[color=ffqqqq,fill=ffqqqq,fill opacity=0.75] (0,0.5774) -- (0.10568,0.39435) -- (0.13166,0.40935) -- (0,0.6374) -- cycle;
\fill[color=ffqqqq,fill=ffqqqq,fill opacity=0.75] (-0.46536,-0.2287) -- (-0.5,-0.2287) -- (-0.42031,-0.09067) -- (-0.39433,-0.10566) -- cycle;
\fill[color=ttffqq,fill=ttffqq,fill opacity=0.75] (-0.2887,-0.2887) -- (-0.5,-0.2887) -- (-0.46536,-0.2287) -- (-0.28869,-0.2287) -- cycle;
\fill[color=ttffqq,fill=ttffqq,fill opacity=0.75] (0.5,-0.2887) -- (0.28868,-0.2887) -- (0.28868,-0.2287) -- (0.46536,-0.2287) -- cycle;
\fill[color=ffqqqq,fill=ffqqqq,fill opacity=0.75] (0.5,-0.2287) -- (0.46536,-0.2287) -- (0.39434,-0.10568) -- (0.42032,-0.09068) -- cycle;
\fill[line width=2.4pt,color=qqqqcc,fill=qqqqcc,fill opacity=0.75] (-0.2287,-0.2887) -- (-0.22869,-0.2287) -- (-0.28869,-0.2287) -- (-0.2887,-0.2887) -- cycle;
\fill[line width=3.2pt,color=ccqqcc,fill=ccqqcc,fill opacity=1.0] (0.42032,-0.09068) -- (0.39434,-0.10568) -- (0.40934,-0.13166) -- (0.43532,-0.11666) -- cycle;
\fill[line width=3.2pt,color=ccqqcc,fill=ccqqcc,fill opacity=1.0] (-0.11666,0.43533) -- (-0.13165,0.40935) -- (-0.10568,0.39435) -- (-0.09068,0.42033) -- cycle;
\draw [color=ffttqq] (0.5774,0)-- (-0.28868,0.5);
\draw [color=ffttqq] (-0.28868,0.5)-- (-0.2887,-0.5);
\draw [color=ffttqq] (-0.2887,-0.5)-- (0.5774,0);
\draw [color=qqcctt] (0,0.5774)-- (-0.5,-0.2887);
\draw [color=qqcctt] (-0.5,-0.2887)-- (0.5,-0.2887);
\draw [color=qqcctt] (0.5,-0.2887)-- (0,0.5774);
\draw [color=zzttqq] (-0.57735,0)-- (0.28868,-0.5);
\draw [color=zzttqq] (0.28868,-0.5)-- (0.28868,0.5);
\draw [color=zzttqq] (0.28868,0.5)-- (-0.57735,0);
\draw [color=zzttqq] (0,-0.57735)-- (0.5,0.28868);
\draw [color=zzttqq] (0.5,0.28868)-- (-0.5,0.28868);
\draw [color=zzttqq] (-0.5,0.28868)-- (0,-0.57735);
\draw [color=zzttqq] (-0.5,-0.2287)-- (0.5,-0.2287);
\draw [color=zzttqq] (0.5,-0.2287)-- (0,0.6374);
\draw [color=zzttqq] (0,0.6374)-- (-0.5,-0.2287);
\draw [color=zzttqq] (0.6374,0)-- (-0.22868,0.5);
\draw [color=zzttqq] (-0.22868,0.5)-- (-0.2287,-0.5);
\draw [color=zzttqq] (-0.2287,-0.5)-- (0.6374,0);
\draw [color=ffqqqq] (-0.10568,0.39435)-- (-0.13165,0.40935);
\draw [color=ffqqqq] (-0.13165,0.40935)-- (0,0.6374);
\draw [color=ffqqqq] (0,0.6374)-- (0,0.5774);
\draw [color=ffqqqq] (0,0.5774)-- (-0.10568,0.39435);
\draw [color=ffqqqq] (0,0.5774)-- (0.10568,0.39435);
\draw [color=ffqqqq] (0,0.6374)-- (0,0.5774);
\draw [color=ffqqqq] (-0.5,-0.2287)-- (-0.42031,-0.09067);
\draw [color=ffqqqq] (-0.42031,-0.09067)-- (-0.39433,-0.10566);
\draw [color=ffqqqq] (-0.39433,-0.10566)-- (-0.46536,-0.2287);
\draw [color=ttffqq] (-0.2887,-0.2887)-- (-0.5,-0.2887);
\draw [color=ttffqq] (-0.5,-0.2887)-- (-0.46536,-0.2287);
\draw [color=ttffqq] (-0.28869,-0.2287)-- (-0.2887,-0.2887);
\draw [color=ttffqq] (0.5,-0.2887)-- (0.28868,-0.2887);
\draw [color=ttffqq] (0.28868,-0.2887)-- (0.28868,-0.2287);
\draw [color=ttffqq] (0.28868,-0.2287)-- (0.46536,-0.2287);
\draw [color=ttffqq] (0.46536,-0.2287)-- (0.5,-0.2887);
\draw [color=ffqqqq] (0.5,-0.2287)-- (0.46536,-0.2287);
\draw [color=ffqqqq] (0.46536,-0.2287)-- (0.39434,-0.10568);
\draw [color=ffqqqq] (0.39434,-0.10568)-- (0.42032,-0.09068);
\draw [color=ffqqqq] (0.42032,-0.09068)-- (0.5,-0.2287);

\begin{scriptsize}
\fill [color=black] (0.5774,0) circle (1.5pt);
\draw[color=black] (0.54,0.008) node {$A_0$};
\fill [color=black] (-0.28868,0.5) circle (1.5pt);
\draw[color=black] (-0.27077,0.51532) node {$A_1$};
\fill [color=black] (-0.2887,-0.5) circle (1.5pt);
\draw[color=black] (-0.30119,-0.39725) node {$A_2$};
\fill [color=black] (0,0.5774) circle (1.5pt);
\draw[color=black] (-0.0,0.54) node {$B_0$};
\fill [color=black] (-0.5,-0.2887) circle (1.5pt);
\draw[color=black] (-0.48258,-0.31) node {$B_1$};
\fill [color=black] (0.5,-0.2887) circle (1.5pt);
\draw[color=black] (0.51798,-0.27347) node {$B_2$};
\fill [color=black] (0,0.6374) circle (1.5pt);
\draw[color=black] (0.03,0.64) node {$B'_0$};
\fill [color=black] (-0.5,-0.2287) circle (1.5pt);
\draw[color=black] (-0.524,-0.21279) node {$B'_1$};
\fill [color=black] (0.5,-0.2287) circle (1.5pt);
\draw[color=black] (0.51915,-0.21279) node {$B'_2$};
\fill [color=black] (0.6374,0) circle (1.5pt);
\draw[color=black] (0.67,0.004) node {$A'_0$};
\fill [color=black] (-0.22868,0.5) circle (1.5pt);
\draw[color=black] (-0.20991,0.51532) node {$A'_1$};
\fill [color=black] (-0.2287,-0.5) circle (1.5pt);
\draw[color=black] (-0.25321,-0.39725) node {$A'_2$};
\fill [color=black] (-0.13165,0.40935) circle (1.5pt);
\draw[color=black] (-0.16,0.4) node {$I'_0$};
\fill [color=black] (-0.10568,0.39435) circle (1.5pt);
\draw[color=black] (-0.099,0.37) node {$I_0$};
\fill [color=black] (0.10568,0.39435) circle (1.5pt);
\draw[color=black] (0.083,0.405) node {$I_1$};
\fill [color=black] (0.13166,0.40935) circle (1.5pt);
\draw[color=black] (0.14,0.44) node {$I'_1$};
\fill [color=black] (0.46536,-0.2287) circle (1.5pt);
\draw[color=black] (0.438,-0.21279) node {$L_2$};
\fill [color=black] (0.42032,-0.09068) circle (1.5pt);
\draw[color=black] (0.425,-0.066) node {$I'_3$};
\fill [color=black] (0.39434,-0.10568) circle (1.5pt);
\draw[color=black] (0.375,-0.09023) node {$I_3$};
\fill [color=black] (0.28868,-0.2287) circle (1.5pt);also 
\draw[color=black] (0.30266,-0.21279) node {$J_3$};
\fill [color=black] (-0.28869,-0.2287) circle (1.5pt);
\draw[color=black] (-0.27194,-0.21279) node {$J'_2$};
\fill [color=black] (-0.2887,-0.2887) circle (1.5pt);
\draw[color=black] (-0.27428,-0.31) node {$J_2$};
\fill [color=black] (0.28868,-0.2887) circle (1.5pt);
\draw[color=black] (0.305,-0.31) node {$J'_3$};
\fill [color=black] (-0.42031,-0.09067) circle (1.5pt);
\draw[color=black] (-0.45,-0.1) node {$I'_2$};
\fill [color=black] (-0.39433,-0.10566) circle (1.5pt);
\draw[color=black] (-0.37,-0.1) node {$I_2$};
\fill [color=black] (-0.46536,-0.2287) circle (1.5pt);
\draw[color=black] (-0.435,-0.21279) node {$L_1$};
\fill [color=black] (-0.22869,-0.2287) circle (1.5pt);
\draw[color=black] (-0.21,-0.25) node {$K_2$};
\fill [color=black] (-0.2287,-0.2887) circle (1.5pt);
\draw[color=black] (-0.20991,-0.31) node {$K'_2$};
\fill [color=black] (0.40934,-0.13166) circle (1.5pt);
\draw[color=black] (0.405,-0.16) node {$K_3$};
\fill [color=black] (0.43532,-0.11666) circle (1.5pt);
\draw[color=black] (0.465,-0.122) node {$K'_3$};
\fill [color=black] (-0.11666,0.43533) circle (1.5pt);
\draw[color=black] (-0.13,0.465) node {$K'_0$};
\fill [color=black] (-0.09068,0.42033) circle (1.5pt);
\draw[color=black] (-0.06,0.43) node {$K_0$};
\end{scriptsize}
\end{tikzpicture}

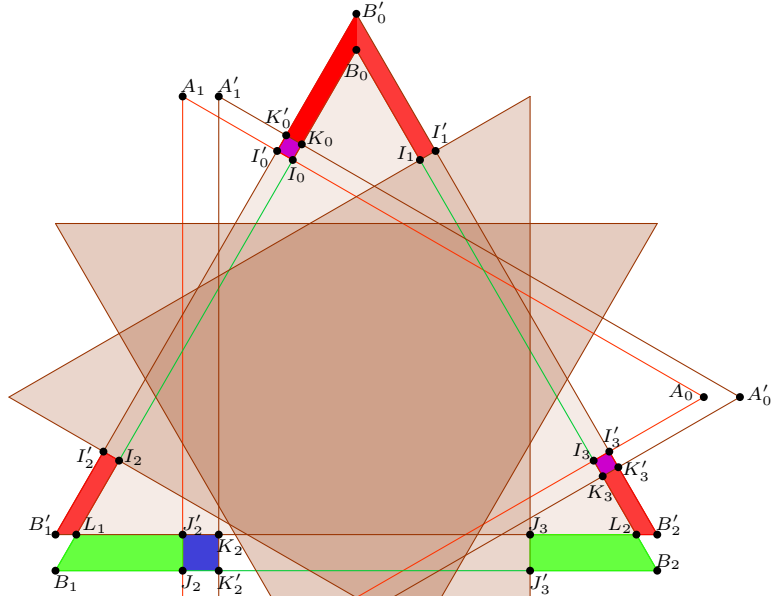
\captionof{figure}{Calculation of the area}
   \label{ter}

\begin{lemma}
 Suppose that we translate away two different neighbouring triangles from the basic setup by pattern preserving regular translations such that the directions of the translations are perpendicular. Then the area of the union of the triangles decreases.
 
\end{lemma}

\begin{proof}
Suppose that we translate the triangles consecutively. By the previous lemma we have that the translation of one triangle does not change the area. This would be true for the 'second' triangle as well, except that after the translation of the 'first' triangle, the picture is already modified. To obtain the difference in the area, it is enough to consider the modifications what the translation of the 'first' triangle gives (the blue and pink rectangles on the figure). In our case it will decrease the area.

To be precise, let the translated triangles be $A_0A_1A_2$ and $B_0B_1B_2$ and the translations are $(\delta,0)$ and $(\varepsilon,0)$ (Figure \ref{ter}). Then suppose that we have translated $A_0A_1A_2$ to 
$A'_0A'_1A'_2$ 'first'. Now the translation of $B_0B_1B_2$ to $B'_0B'_1B'_2$ increases the 
area of the union by

\[a(B'_0B_0K_0K'_0)+a(I'_2I_2L_1B'_1)+a(B'_2L_2K_3K'_3)+a(B'_0B_0I_1I'_1)\]

and decreases it by $a(B_1L_1K_2K'_2)+a(J_3L_2B_2J'_3)$ (the quadrilaterals added coloured by red, the subtracted ones coloured by green and blue on Figure \ref{ter}).

Thus we have
\[a(B'_0B_0K_0K'_0)+a(I'_2I_2L_1B'_1)+a(B'_2L_2K_3K'_3)+a(B'_0B_0I_1I'_1)\]\[-a(B_1L_1K_2K'_2)-a(J_3L_2B_2J'_3)\]\[<a(B'_0B_0I_0I'_0)+a(I'_2I_2L_1B'_1)+a(B'_2L_2I_3I'_3)+A(B'_0B_0I_1I'_1)\]\[-a(B_1L_1J_2J'_2)-a(J_3L_2B_2J'_3)=0.\]

\end{proof}
Putting together the 3 lemmas we have that after translating two different neighbouring triangles by regular pattern preserving translations,
the area decreases, but the perimeter does not change. Since by Lemma \ref{common_centre} the perimeter-to-area ratio of the basic $(4,3)$-setup equals to $4\sqrt{3}$, we are done. 
\end{proof}

\section{Other counterexamples}
\label{s:other}
Let us mention that there exist four squares in the plain forming a counterexample and are 
close to a $(4, 4)$-setup in some sense. Without proof we present a construction of such squares with perimeter-to-area ratio of the 
union $\approx 4.02 > 4$.
The four squares will be the following: 
$$S_1 = \text{conv} \left\{ \left( \frac{1}{2}, \frac{1}{2} \right),
  \left( -\frac{1}{2}, \frac{1}{2} \right),
  \left( -\frac{1}{2}, -\frac{1}{2} \right),
  \left( \frac{1}{2}, -\frac{1}{2} \right) \right\},
$$
$$S_2 = \text{conv} \left\{ \left( \frac{149}{650}, \frac{399}{650} \right),
  \left( -\frac{451}{650}, \frac{149}{650} \right),
  \left( -\frac{201}{650}, -\frac{451}{650} \right),
  \left( \frac{399}{650}, -\frac{201}{650} \right) \right\},
$$
$$S_3 = \text{conv} \left\{ \left( \frac{399}{650}, \frac{201}{650} \right),
  \left( -\frac{201}{650}, \frac{451}{650} \right),
  \left( -\frac{451}{650}, -\frac{149}{650} \right),
  \left( \frac{149}{650}, -\frac{399}{650} \right) \right\},
$$
$$S_4 = \text{conv} \left\{ \left( -\frac{91}{1450}, \frac{41}{58} \right), 
  \left( -\frac{1141}{1450}, \frac{1}{58} \right), 
  \left( -\frac{141}{1450}, -\frac{41}{58} \right), 
  \left( \frac{909}{1450}, -\frac{1}{58} \right) \right\}.
$$

\begin{wrapfigure}{r}{5cm}
	\centering 
	\definecolor{zzttqq}{rgb}{0.6,0.2,0}
	\definecolor{qqqqff}{rgb}{0,0,1}
	\begin{tikzpicture}[line cap=round,line join=round,>=triangle 45,x=3.0cm,y=3.0cm]
	\draw[->,color=black] (-0.55062,0) -- (0.71288,0);
	
	\draw[->,color=black] (0,-0.67654) -- (0,0.6954);
	
	\clip(-0.55062,-0.67654) rectangle (0.71288,0.6954);
	\fill[color=zzttqq,fill=zzttqq,fill opacity=0.1] (-0.5,0.23205) -- (0.5,0.23205) -- (0,-0.634) -- cycle;
	\fill[color=zzttqq,fill=zzttqq,fill opacity=0.1] (-0.5,-0.23205) -- (0.5,-0.23205) -- (0,0.634) -- cycle;
	\fill[color=zzttqq,fill=zzttqq,fill opacity=0.1] (-0.23205,-0.5) -- (-0.23205,0.5) -- (0.634,0) -- cycle;
	\fill[color=zzttqq,fill=zzttqq,fill opacity=0.1] (0,0.634) -- (0.23206,0.23205) -- (0.5,0.23205) -- (0.43302,0.11603) -- (0.634,0) -- (0.43302,-0.11603) -- (0.5,-0.23205) -- (0.23206,-0.23205) -- (0,-0.634) -- (-0.11603,-0.43302) -- (-0.23205,-0.5) -- (-0.23205,-0.23205) -- (-0.5,-0.23205) -- (-0.36603,0) -- (-0.5,0.23205) -- (-0.23205,0.23205) -- (-0.23205,0.5) -- (-0.11603,0.43302) -- cycle;
	\draw [color=zzttqq] (-0.5,0.23205)-- (0.5,0.23205);
	\draw [color=zzttqq] (0.5,0.23205)-- (0,-0.634);
	\draw [color=zzttqq] (0,-0.634)-- (-0.5,0.23205);
	\draw [color=zzttqq] (-0.5,-0.23205)-- (0.5,-0.23205);
	\draw [color=zzttqq] (0.5,-0.23205)-- (0,0.634);
	\draw [color=zzttqq] (0,0.634)-- (-0.5,-0.23205);
	\draw [color=zzttqq] (-0.23205,-0.5)-- (-0.23205,0.5);
	\draw [color=zzttqq] (-0.23205,0.5)-- (0.634,0);
	\draw [color=zzttqq] (0.634,0)-- (-0.23205,-0.5);
	\draw [color=zzttqq] (0,0.634)-- (0.23206,0.23205);
	\draw [color=zzttqq] (0.23206,0.23205)-- (0.5,0.23205);
	\draw [color=zzttqq] (0.5,0.23205)-- (0.43302,0.11603);
	\draw [color=zzttqq] (0.43302,0.11603)-- (0.634,0);
	\draw [color=zzttqq] (0.634,0)-- (0.43302,-0.11603);
	\draw [color=zzttqq] (0.43302,-0.11603)-- (0.5,-0.23205);
	\draw [color=zzttqq] (0.5,-0.23205)-- (0.23206,-0.23205);
	\draw [color=zzttqq] (0.23206,-0.23205)-- (0,-0.634);
	\draw [color=zzttqq] (0,-0.634)-- (-0.11603,-0.43302);
	\draw [color=zzttqq] (-0.11603,-0.43302)-- (-0.23205,-0.5);
	\draw [color=zzttqq] (-0.23205,-0.5)-- (-0.23205,-0.23205);
	\draw [color=zzttqq] (-0.23205,-0.23205)-- (-0.5,-0.23205);
	\draw [color=zzttqq] (-0.5,-0.23205)-- (-0.36603,0);
	\draw [color=zzttqq] (-0.36603,0)-- (-0.5,0.23205);
	\draw [color=zzttqq] (-0.5,0.23205)-- (-0.23205,0.23205);
	\draw [color=zzttqq] (-0.23205,0.23205)-- (-0.23205,0.5);
	\draw [color=zzttqq] (-0.23205,0.5)-- (-0.11603,0.43302);
	\draw [color=zzttqq] (-0.11603,0.43302)-- (0,0.634);
	\draw (0.23206,0.23205)-- (0.23206,-0.23205);
	\begin{scriptsize}
	\draw [fill=black] (-0.5,0.23205) circle (1.5pt);
	\draw[color=black] (-0.47268,0.28635) node {$A_1$};
	\draw [fill=black] (0.5,0.23205) circle (1.5pt);
	\draw[color=black] (0.52773,0.28635) node {$A_2$};
	\draw [fill=black] (0,-0.634) circle (1.5pt);
	\draw[color=black] (0.0752,-0.5963) node {$A_3$};
	\draw [fill=black] (-0.5,-0.23205) circle (1.5pt);
	\draw[color=black] (-0.47268,-0.29) node {$B_1$};
	\draw [fill=black] (0.5,-0.23205) circle (1.5pt);
	\draw[color=black] (0.52773,-0.29) node {$B_2$};
	\draw [fill=black] (0,0.634) circle (1.5pt);
	\draw[color=black] (0.08,0.61) node {$B_3$};
	\draw [fill=black] (-0.23205,-0.5) circle (1.5pt);
	\draw[color=black] (-0.29,-0.44833) node {$C_1$};
	\draw [fill=black] (-0.23205,0.5) circle (1.5pt);
	\draw[color=black] (-0.29,0.55208) node {$C_2$};
	\draw [fill=black] (0.634,0) circle (1.5pt);
	\draw[color=black] (0.66216,0.06188) node {$C_3$};
	\end{scriptsize}
	\end{tikzpicture}
	\captionof{figure}{}
   \label{harom}
\end{wrapfigure}
In the case of triangles we do not need $4$ to a form a counterexample. We show a construction for $3$ triangles but omit 
the proof of correctness (Figure \ref{harom}).

Let $S$ be the biggest square that can be written into a unit 
equilateral triangle, with one line segment in common with the 
triangle. Now draw three equilateral triangle around $S$, with 
three sides of the square lying on one-one side of the 
triangles. For this construction the perimeter-to-area ratio of the 
union is $\approx 6.97 > 4 \sqrt{3}$. Actually, we get a 
counterexample even if we add the forth triangle to the figure. In 
that case the ratio is $\approx 7.06$. 

Using an optimisation algorithm, we found the following figure of a construction containing 
25 squares with ratio of about $4.28$.
\begin{center}
\definecolor{zzttqq}{rgb}{0.6,0.2,0}
\definecolor{zzttqq}{rgb}{0.6,0.2,0}
\begin{tikzpicture}[line cap=round,line join=round,>=triangle 45,x=6.0cm,y=6.0cm]
	
	\clip(-0.8,-0.40046) rectangle (0.78,1.19853);
	\fill[color=zzttqq,fill=zzttqq,fill opacity=0.1] (0.68382,0.36554) -- (-0.02329,1.07265) -- (-0.7304,0.36554) -- (-0.02329,-0.34156) -- cycle;
	\fill[color=zzttqq,fill=zzttqq,fill opacity=0.1] (0.66399,0.434) -- (-0.09031,1.09052) -- (-0.74684,0.33622) -- (0.00747,-0.3203) -- cycle;
	\fill[color=zzttqq,fill=zzttqq,fill opacity=0.1] (0.65075,0.49059) -- (-0.13941,1.10349) -- (-0.75231,0.31334) -- (0.03784,-0.29957) -- cycle;
	\fill[color=zzttqq,fill=zzttqq,fill opacity=0.1] (0.64496,0.54493) -- (-0.17533,1.11688) -- (-0.74728,0.29659) -- (0.07301,-0.27536) -- cycle;
	\fill[color=zzttqq,fill=zzttqq,fill opacity=0.1] (0.64296,0.60644) -- (-0.21093,1.12689) -- (-0.73138,0.27299) -- (0.12251,-0.24745) -- cycle;
	\fill[color=zzttqq,fill=zzttqq,fill opacity=0.1] (0.64244,0.68575) -- (-0.24857,1.13974) -- (-0.70256,0.24873) -- (0.18845,-0.20526) -- cycle;
	\fill[color=zzttqq,fill=zzttqq,fill opacity=0.1] (0.63839,0.75641) -- (-0.28359,1.14364) -- (-0.67082,0.22166) -- (0.25116,-0.16558) -- cycle;
	\fill[color=zzttqq,fill=zzttqq,fill opacity=0.1] (0.63431,0.81245) -- (-0.3092,1.14379) -- (-0.64054,0.20027) -- (0.30297,-0.13106) -- cycle;
	\fill[color=zzttqq,fill=zzttqq,fill opacity=0.1] (0.63337,0.85138) -- (-0.32692,1.13037) -- (-0.60591,0.17007) -- (0.35438,-0.10892) -- cycle;
	\fill[color=zzttqq,fill=zzttqq,fill opacity=0.1] (0.63584,0.88768) -- (-0.34007,1.10582) -- (-0.55822,0.12991) -- (0.4177,-0.08824) -- cycle;
	\fill[color=zzttqq,fill=zzttqq,fill opacity=0.1] (0.63391,0.92448) -- (-0.35486,1.07393) -- (-0.50431,0.08516) -- (0.48446,-0.06429) -- cycle;
	\fill[color=zzttqq,fill=zzttqq,fill opacity=0.1] (0.63097,0.95421) -- (-0.3653,1.04049) -- (-0.45159,0.04422) -- (0.54468,-0.04206) -- cycle;
	\fill[color=zzttqq,fill=zzttqq,fill opacity=0.1] (0.62231,0.97404) -- (-0.37719,1.00545) -- (-0.4086,0.00594) -- (0.5909,-0.02547) -- cycle;
	\fill[color=zzttqq,fill=zzttqq,fill opacity=0.1] (0.60016,0.99004) -- (-0.39935,0.95863) -- (-0.36794,-0.04087) -- (0.63157,-0.00946) -- cycle;
	\fill[color=zzttqq,fill=zzttqq,fill opacity=0.1] (0.5761,0.99975) -- (-0.41946,0.90565) -- (-0.32536,-0.08992) -- (0.67021,0.00419) -- cycle;
	\fill[color=zzttqq,fill=zzttqq,fill opacity=0.1] (0.54435,1.00049) -- (-0.44334,0.84405) -- (-0.2869,-0.14363) -- (0.70078,0.0128) -- cycle;
	\fill[color=zzttqq,fill=zzttqq,fill opacity=0.1] (0.50868,0.99851) -- (-0.46724,0.78037) -- (-0.24909,-0.19555) -- (0.72682,0.0226) -- cycle;
	\fill[color=zzttqq,fill=zzttqq,fill opacity=0.1] (0.46657,1.00044) -- (-0.49373,0.72145) -- (-0.21474,-0.23885) -- (0.74556,0.04014) -- cycle;
	\fill[color=zzttqq,fill=zzttqq,fill opacity=0.1] (0.41756,1.0059) -- (-0.52332,0.66716) -- (-0.18458,-0.27372) -- (0.7563,0.06502) -- cycle;
	\fill[color=zzttqq,fill=zzttqq,fill opacity=0.1] (0.35653,1.01048) -- (-0.56123,0.61333) -- (-0.16408,-0.30442) -- (0.75367,0.09273) -- cycle;
	\fill[color=zzttqq,fill=zzttqq,fill opacity=0.1] (0.2899,1.01276) -- (-0.6011,0.55877) -- (-0.14711,-0.33223) -- (0.74389,0.12176) -- cycle;
	\fill[color=zzttqq,fill=zzttqq,fill opacity=0.1] (0.22496,1.01637) -- (-0.63578,0.50733) -- (-0.12674,-0.35341) -- (0.734,0.15563) -- cycle;
	\fill[color=zzttqq,fill=zzttqq,fill opacity=0.1] (0.16398,1.02455) -- (-0.6631,0.46247) -- (-0.10101,-0.36461) -- (0.72607,0.19747) -- cycle;
	\fill[color=zzttqq,fill=zzttqq,fill opacity=0.1] (0.10211,1.03591) -- (-0.68805,0.423) -- (-0.07514,-0.36715) -- (0.71502,0.24576) -- cycle;
	\fill[color=zzttqq,fill=zzttqq,fill opacity=0.1] (0.04105,1.05157) -- (-0.70907,0.39025) -- (-0.04775,-0.35986) -- (0.70236,0.30145) -- cycle;
	\draw [color=zzttqq] (0.68382,0.36554)-- (-0.02329,1.07265);
	\draw [color=zzttqq] (-0.02329,1.07265)-- (-0.7304,0.36554);
	\draw [color=zzttqq] (-0.7304,0.36554)-- (-0.02329,-0.34156);
	\draw [color=zzttqq] (-0.02329,-0.34156)-- (0.68382,0.36554);
	\draw [color=zzttqq] (0.66399,0.434)-- (-0.09031,1.09052);
	\draw [color=zzttqq] (-0.09031,1.09052)-- (-0.74684,0.33622);
	\draw [color=zzttqq] (-0.74684,0.33622)-- (0.00747,-0.3203);
	\draw [color=zzttqq] (0.00747,-0.3203)-- (0.66399,0.434);
	\draw [color=zzttqq] (0.65075,0.49059)-- (-0.13941,1.10349);
	\draw [color=zzttqq] (-0.13941,1.10349)-- (-0.75231,0.31334);
	\draw [color=zzttqq] (-0.75231,0.31334)-- (0.03784,-0.29957);
	\draw [color=zzttqq] (0.03784,-0.29957)-- (0.65075,0.49059);
	\draw [color=zzttqq] (0.64496,0.54493)-- (-0.17533,1.11688);
	\draw [color=zzttqq] (-0.17533,1.11688)-- (-0.74728,0.29659);
	\draw [color=zzttqq] (-0.74728,0.29659)-- (0.07301,-0.27536);
	\draw [color=zzttqq] (0.07301,-0.27536)-- (0.64496,0.54493);
	\draw [color=zzttqq] (0.64296,0.60644)-- (-0.21093,1.12689);
	\draw [color=zzttqq] (-0.21093,1.12689)-- (-0.73138,0.27299);
	\draw [color=zzttqq] (-0.73138,0.27299)-- (0.12251,-0.24745);
	\draw [color=zzttqq] (0.12251,-0.24745)-- (0.64296,0.60644);
	\draw [color=zzttqq] (0.64244,0.68575)-- (-0.24857,1.13974);
	\draw [color=zzttqq] (-0.24857,1.13974)-- (-0.70256,0.24873);
	\draw [color=zzttqq] (-0.70256,0.24873)-- (0.18845,-0.20526);
	\draw [color=zzttqq] (0.18845,-0.20526)-- (0.64244,0.68575);
	\draw [color=zzttqq] (0.63839,0.75641)-- (-0.28359,1.14364);
	\draw [color=zzttqq] (-0.28359,1.14364)-- (-0.67082,0.22166);
	\draw [color=zzttqq] (-0.67082,0.22166)-- (0.25116,-0.16558);
	\draw [color=zzttqq] (0.25116,-0.16558)-- (0.63839,0.75641);
	\draw [color=zzttqq] (0.63431,0.81245)-- (-0.3092,1.14379);
	\draw [color=zzttqq] (-0.3092,1.14379)-- (-0.64054,0.20027);
	\draw [color=zzttqq] (-0.64054,0.20027)-- (0.30297,-0.13106);
	\draw [color=zzttqq] (0.30297,-0.13106)-- (0.63431,0.81245);
	\draw [color=zzttqq] (0.63337,0.85138)-- (-0.32692,1.13037);
	\draw [color=zzttqq] (-0.32692,1.13037)-- (-0.60591,0.17007);
	\draw [color=zzttqq] (-0.60591,0.17007)-- (0.35438,-0.10892);
	\draw [color=zzttqq] (0.35438,-0.10892)-- (0.63337,0.85138);
	\draw [color=zzttqq] (0.63584,0.88768)-- (-0.34007,1.10582);
	\draw [color=zzttqq] (-0.34007,1.10582)-- (-0.55822,0.12991);
	\draw [color=zzttqq] (-0.55822,0.12991)-- (0.4177,-0.08824);
	\draw [color=zzttqq] (0.4177,-0.08824)-- (0.63584,0.88768);
	\draw [color=zzttqq] (0.63391,0.92448)-- (-0.35486,1.07393);
	\draw [color=zzttqq] (-0.35486,1.07393)-- (-0.50431,0.08516);
	\draw [color=zzttqq] (-0.50431,0.08516)-- (0.48446,-0.06429);
	\draw [color=zzttqq] (0.48446,-0.06429)-- (0.63391,0.92448);
	\draw [color=zzttqq] (0.63097,0.95421)-- (-0.3653,1.04049);
	\draw [color=zzttqq] (-0.3653,1.04049)-- (-0.45159,0.04422);
	\draw [color=zzttqq] (-0.45159,0.04422)-- (0.54468,-0.04206);
	\draw [color=zzttqq] (0.54468,-0.04206)-- (0.63097,0.95421);
	\draw [color=zzttqq] (0.62231,0.97404)-- (-0.37719,1.00545);
	\draw [color=zzttqq] (-0.37719,1.00545)-- (-0.4086,0.00594);
	\draw [color=zzttqq] (-0.4086,0.00594)-- (0.5909,-0.02547);
	\draw [color=zzttqq] (0.5909,-0.02547)-- (0.62231,0.97404);
	\draw [color=zzttqq] (0.60016,0.99004)-- (-0.39935,0.95863);
	\draw [color=zzttqq] (-0.39935,0.95863)-- (-0.36794,-0.04087);
	\draw [color=zzttqq] (-0.36794,-0.04087)-- (0.63157,-0.00946);
	\draw [color=zzttqq] (0.63157,-0.00946)-- (0.60016,0.99004);
	\draw [color=zzttqq] (0.5761,0.99975)-- (-0.41946,0.90565);
	\draw [color=zzttqq] (-0.41946,0.90565)-- (-0.32536,-0.08992);
	\draw [color=zzttqq] (-0.32536,-0.08992)-- (0.67021,0.00419);
	\draw [color=zzttqq] (0.67021,0.00419)-- (0.5761,0.99975);
	\draw [color=zzttqq] (0.54435,1.00049)-- (-0.44334,0.84405);
	\draw [color=zzttqq] (-0.44334,0.84405)-- (-0.2869,-0.14363);
	\draw [color=zzttqq] (-0.2869,-0.14363)-- (0.70078,0.0128);
	\draw [color=zzttqq] (0.70078,0.0128)-- (0.54435,1.00049);
	\draw [color=zzttqq] (0.50868,0.99851)-- (-0.46724,0.78037);
	\draw [color=zzttqq] (-0.46724,0.78037)-- (-0.24909,-0.19555);
	\draw [color=zzttqq] (-0.24909,-0.19555)-- (0.72682,0.0226);
	\draw [color=zzttqq] (0.72682,0.0226)-- (0.50868,0.99851);
	\draw [color=zzttqq] (0.46657,1.00044)-- (-0.49373,0.72145);
	\draw [color=zzttqq] (-0.49373,0.72145)-- (-0.21474,-0.23885);
	\draw [color=zzttqq] (-0.21474,-0.23885)-- (0.74556,0.04014);
	\draw [color=zzttqq] (0.74556,0.04014)-- (0.46657,1.00044);
	\draw [color=zzttqq] (0.41756,1.0059)-- (-0.52332,0.66716);
	\draw [color=zzttqq] (-0.52332,0.66716)-- (-0.18458,-0.27372);
	\draw [color=zzttqq] (-0.18458,-0.27372)-- (0.7563,0.06502);
	\draw [color=zzttqq] (0.7563,0.06502)-- (0.41756,1.0059);
	\draw [color=zzttqq] (0.35653,1.01048)-- (-0.56123,0.61333);
	\draw [color=zzttqq] (-0.56123,0.61333)-- (-0.16408,-0.30442);
	\draw [color=zzttqq] (-0.16408,-0.30442)-- (0.75367,0.09273);
	\draw [color=zzttqq] (0.75367,0.09273)-- (0.35653,1.01048);
	\draw [color=zzttqq] (0.2899,1.01276)-- (-0.6011,0.55877);
	\draw [color=zzttqq] (-0.6011,0.55877)-- (-0.14711,-0.33223);
	\draw [color=zzttqq] (-0.14711,-0.33223)-- (0.74389,0.12176);
	\draw [color=zzttqq] (0.74389,0.12176)-- (0.2899,1.01276);
	\draw [color=zzttqq] (0.22496,1.01637)-- (-0.63578,0.50733);
	\draw [color=zzttqq] (-0.63578,0.50733)-- (-0.12674,-0.35341);
	\draw [color=zzttqq] (-0.12674,-0.35341)-- (0.734,0.15563);
	\draw [color=zzttqq] (0.734,0.15563)-- (0.22496,1.01637);
	\draw [color=zzttqq] (0.16398,1.02455)-- (-0.6631,0.46247);
	\draw [color=zzttqq] (-0.6631,0.46247)-- (-0.10101,-0.36461);
	\draw [color=zzttqq] (-0.10101,-0.36461)-- (0.72607,0.19747);
	\draw [color=zzttqq] (0.72607,0.19747)-- (0.16398,1.02455);
	\draw [color=zzttqq] (0.10211,1.03591)-- (-0.68805,0.423);
	\draw [color=zzttqq] (-0.68805,0.423)-- (-0.07514,-0.36715);
	\draw [color=zzttqq] (-0.07514,-0.36715)-- (0.71502,0.24576);
	\draw [color=zzttqq] (0.71502,0.24576)-- (0.10211,1.03591);
	\draw [color=zzttqq] (0.04105,1.05157)-- (-0.70907,0.39025);
	\draw [color=zzttqq] (-0.70907,0.39025)-- (-0.04775,-0.35986);
	\draw [color=zzttqq] (-0.04775,-0.35986)-- (0.70236,0.30145);
	\draw [color=zzttqq] (0.70236,0.30145)-- (0.04105,1.05157);
\end{tikzpicture}

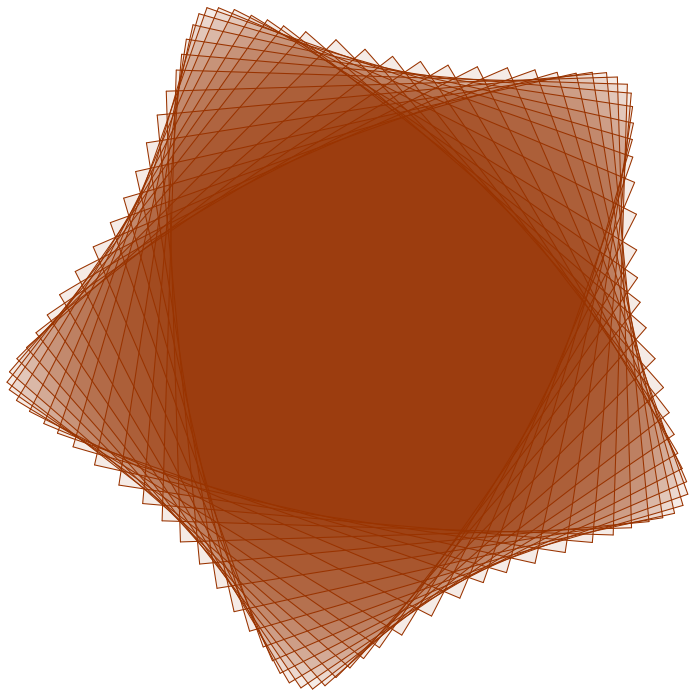
\captionof{figure}{$25$ squares with ratio $\approx 4.28$}
   \label{25}
\end{center}

\section{Open problems}

We believe that the constructions containing five squares and four 
equilateral triangles can be generalised to give a counterexample 
of $n + 1$ regular polygons with $n$ vertices and unit side length.

\begin{conjecture}
  The perimeter-to-area ratio of a shifted $(n + 1,n)$-setup is 
  greater than the ratio in the case of a single regular $n$-gon with unit side length.
\end{conjecture}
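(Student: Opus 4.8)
The plan is to follow the two-step strategy of the $(5,4)$- and $(4,3)$-cases closely: show that passing from the basic $(n+1,n)$-setup to the shifted one leaves the perimeter unchanged while strictly decreasing the area. Since $\gcd(n,n+1)=1$ the basic $(n+1,n)$-setup is defined, and by Lemma \ref{common_centre} its perimeter-to-area ratio equals that of a single unit $n$-gon, namely $2/d$, where $d$ is the common distance of its edges from $O$. If the shift fixes the perimeter $p$ and lowers the area from $a$ to some $a'<a$, then the shifted ratio satisfies $p/a'>p/a=2/d$, which is exactly the assertion. So it suffices to prove a perimeter lemma and an area lemma, just as before.

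The first thing I would record is the symmetry: the shifted setup is invariant under rotation by $2\pi/(n+1)$ about $O$. This rotation advances every vertex of the ambient regular $(n+1)n$-gon by $n$ positions, so (as $\gcd(n,n+1)=1$) it permutes the $n+1$ copies, their marked vertices and the shift vectors in a single $(n+1)$-cycle; hence the boundary, perimeter and area all respect $C_{n+1}$ and it suffices to understand one angular sector. For the perimeter I would then generalise the perimeter lemmas: translate the $n+1$ polygons one at a time and show that each regular pattern-preserving translation of a single polygon $P_i$ by a vector $v$ preserves the perimeter. The clean tool is the exact first-variation identity $\dot p=\sum_{X}\dot X\cdot\bigl(\hat t_{\mathrm{in}}(X)-\hat t_{\mathrm{out}}(X)\bigr)$, where $\dot X$ is the velocity of the boundary vertex $X$ as $P_i$ moves and $\hat t_{\mathrm{in}},\hat t_{\mathrm{out}}$ are the incoming and outgoing unit tangents at $X$; this holds exactly, not merely infinitesimally, because a translation freezes every edge direction. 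Only vertices incident to a moving edge contribute, every angle that appears is pinned to its basic-setup value (a combination of the interior angle $(n-2)\pi/n$ and the fine angle $2\pi/(n(n+1))$), and I would check that the contributions cancel in $C_{n+1}$-equivalent pairs via the same congruent triangles as in Figures \ref{perim}, \ref{perim2} and \ref{ker}. The regularity of $v$, i.e.\ that it points toward a vertex of the moving polygon, is precisely what makes the two triangles at each marked vertex congruent, giving exact cancellation for every admissible $\varepsilon$.

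For the area I would again reduce to one sector by $C_{n+1}$ and compare the region added to the union with the region removed, as in the hexagon comparison $t<T$ of the $(5,4)$ proof. Equivalently, using the same expansion $a=\tfrac12\sum_e \ell_e\,d_e$ (signed edge-distances from $O$) employed in the proof of Lemma \ref{common_centre}, and knowing that $\sum_e\ell_e=p$ is now fixed by the previous step, it is enough to show that the shift strictly lowers the length-weighted mean distance $\sum_e \ell_e d_e$. The added pieces are the thin spikes near the marked vertices, while the removed pieces are the retreating notches; the two families are congruent up to swapping which flanking edges are long, and the longer flanks sit on the removed side, which forces removed $>$ added. I would package this as a single hexagon/quadrilateral inequality whose parameters are explicit trigonometric functions of $n$ (built from $(n-2)\pi/n$, the $(n+1)$-gon interior angle $(n-1)\pi/(n+1)$, and $2\pi/(n(n+1))$), and reduce the area lemma to one inequality to be verified for all $n\ge 3$.

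The main obstacle I expect is none of these computations in isolation, but pinning down, uniformly in $n$, the combinatorial type of the boundary of the shifted setup: which edges survive, which pairs of copies cross, and that this incidence pattern is $C_{n+1}$-symmetric and stable for every $n$, with a uniform lower bound on the admissible $\varepsilon$ (the definition of the shifted setup supplies one only for each fixed $n$). Once the pattern is fixed, the perimeter cancellation becomes a finite symmetry-reduced check and the area claim collapses to the single inequality above; the danger is that for some range of $n$ the boundary pattern differs from the $n=3,4$ pictures, in which case the vertex classification and the hexagon comparison must be redone for each type. I would therefore invest most of the effort in a clean description of the boundary of the basic, and hence, for small $\varepsilon$, the shifted, $(n+1,n)$-setup before invoking the two lemmas.
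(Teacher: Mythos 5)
This statement is not something the paper proves at all: it appears in the ``Open problems'' section as a conjecture, and the paper establishes only the two instances $n=4$ (Theorem \ref{t:squares}, five squares) and $n=3$ (four equilateral triangles). So there is no paper proof to compare yours against; the only question is whether your argument settles the conjecture, and it does not. What you have written is the natural generalisation of the paper's two-lemma scheme (perimeter unchanged, area strictly decreased, then invoke Lemma \ref{common_centre}), plus a correct symmetry observation (rotation by $2\pi/(n+1)$ does permute the $n+1$ shifted polygons cyclically, since $n\equiv -1 \pmod{n+1}$) and a correct first-variation identity for the perimeter. But every step that carries the load is deferred rather than proved: (i) the combinatorial type of the boundary of the shifted $(n+1,n)$-setup --- which polygons meet along it, which vertices survive, and that this pattern is uniform in $n$ --- is exactly what you yourself flag as the main obstacle, and it is never established; (ii) the perimeter cancellation is claimed only by analogy with the $45^\circ/63^\circ/18^\circ$ triangles of Figures \ref{perim} and \ref{perim2} and the $30^\circ/60^\circ/90^\circ$ triangles of Figure \ref{ker}, so for general $n$ the pairing of contributions and their exact cancellation is an assertion, not a derivation; (iii) the area comparison is ``reduced'' to a single trigonometric inequality to be verified for all $n\ge 3$, but that inequality is never written down, let alone proved.

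In short, your text is a research plan --- essentially the plan the authors implicitly suggest by stating this conjecture immediately after the two worked cases --- rather than a proof. To close it you would need: an explicit description of the boundary of the basic (hence, for small $\varepsilon$, the shifted) $(n+1,n)$-setup, with the finitely many angle values that occur as functions of $n$; a proof that the consecutive shifts are pattern preserving with some admissible $\varepsilon>0$ for every $n$; the general-$n$ perimeter bookkeeping carried out on that explicit boundary; and the explicit area inequality, stated and proved for all $n\ge 3$. Until those are done, the statement has exactly the status the paper assigns it: open.
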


According to the results obtained by computer, we think the following is true:
\begin{conjecture}
  Let $k$ and $n$ be coprime natural numbers. Then the perimeter-to-area ratio of a shifted $(k ,n)$-setup is 
  greater than the ratio in the case of a single regular $n$-gon with unit side length if and only if $k > 1$ and $k \equiv 1 \; (\text{mod }n)$.
\end{conjecture}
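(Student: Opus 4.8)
The plan is to separate two phenomena — invariance of the perimeter and the sign of the change of area — and to reduce both to a finite, symmetry-determined local computation. By Lemma \ref{common_centre} the basic $(k,n)$-setup already realises the ratio $2/d$ of a single $n$-gon, so it suffices to compare the shifted configuration with the basic one. Since $\gcd(k,n)=1$, the $kn$ vertices of the basic setup form a regular $kn$-gon and the boundary of the union is a star-shaped $2kn$-gon with two vertex types: the $kn$ \emph{outer} corners, each a vertex of some $P_i$ with interior angle $\pi-2\pi/n$, and the $kn$ \emph{inner} reflex vertices, intersections of edges of angularly adjacent $n$-gons, whose associated bite angle is $\delta=\pi-\frac{2(k-1)\pi}{kn}$ (this is the $108^\circ$ of the five-square case and the $90^\circ$ of the four-triangle case). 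The shifted setup moves each $P_i$ by $\varepsilon$ toward its marked vertex; the marked vertices, being every $n$-th vertex of the $kn$-gon, give $k$ translation directions spaced by $2\pi/k$, so the shifted configuration is invariant under the cyclic group $C_k$. I would use this symmetry throughout to reduce every global quantity to one fundamental sector.

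First I would prove, as a standalone lemma valid for all coprime $k,n$, that any regular pattern-preserving translation of a single polygon leaves the perimeter unchanged; applying it $k$ times (moving the polygons one at a time, each step pattern-preserving) then equates the perimeters of the shifted and basic setups. Because the pattern is preserved, every boundary edge keeps a \emph{fixed} direction $\alpha_m$, so its signed length $\alpha_m\cdot(V_{m+1}-V_m)$ is affine in the translation parameter $t$, and hence so is the perimeter; it is enough that the derivative vanish. Writing the boundary as an oriented cycle $V_1,\dots,V_l$, this derivative equals $\sum_m(\alpha_{m-1}-\alpha_m)\cdot\dot V_m$, and only vertices incident to the moving polygon contribute. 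Exactly as in the two worked examples, regularity means the translation is along a symmetry axis of the moved polygon, and the contributions organise into congruent right triangles whose angles are the fixed boundary angles $\pi-2\pi/n$ and $\delta$; the increments at each forward vertex cancel against the decrements at its mirror and rearward counterparts. The only input is the list of boundary angles, which depends on $(k,n)$ alone, so the cancellation is purely trigonometric.

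With perimeter invariance in hand, the ratio exceeds $2/d$ if and only if the area strictly decreases, so the statement reduces to the sign of $\Delta A:=a(\text{shifted})-a(\text{basic})$. Using the $C_k$ symmetry I would write $\Delta A$ as $k$ times a sum of signed areas of small polygons (triangles, quadrilaterals and hexagons with sides linear in $\varepsilon$) attached to the outer and inner vertices of one sector. The essential bookkeeping is the \emph{type} of each outer vertex: vertex number $j$ of the $kn$-gon belongs to the polygon whose translation makes angle $2\pi r_j/n$ with the radial direction at $j$, where a short computation gives $r_j\equiv -j\,k^{-1}\pmod n$, recording which of the $n$ vertices of its polygon it is. Thus the placement of the \emph{added} slivers (near the type-$0$ marked vertices, pushed out radially) and the \emph{subtracted} slivers (near the other types) around the boundary is governed entirely by the residue of $k$ modulo $n$. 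Collapsing the telescoping contributions leaves a comparison of two hexagons with identical angles but interchanged side-assignments — exactly the inequality $t<T$ of the five-square case — and the sign of $\Delta A$ becomes the sign of a cosine of an angle that is an integer multiple of $2\pi/n$ fixed by $k\bmod n$.

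The main obstacle is this last step: proving that the signed sum is strictly negative precisely when $k\equiv1\pmod n$ (with $k>1$) and is nonnegative for every other residue. Here the number-theoretic hypothesis must be matched to the geometry. When $k\equiv1\pmod n$ we have $k^{-1}\equiv1$, so passing from one outer vertex to the next changes the type by exactly one step $2\pi/n$; the marked vertices are then cleanly interleaved, the forward slivers line up with the deeper notches, and the subtracted hexagon $T$ strictly dominates the added hexagon $t$. For any other residue the step $-k^{-1}$ scrambles the types, so the added and subtracted contributions either balance (giving equality of the ratio) or reverse. I expect the cleanest route is to compute in closed form both the single-polygon area change and the pairwise interaction correction between consecutive polygons (the analogue of the blue-and-pink rectangles of the triangle case) as functions of $\varepsilon$, $n$, $k$ and $k\bmod n$, and then to establish the sign dichotomy by a direct trigonometric estimate; making this uniform in $n$ and disposing of near-boundary residues such as $k\equiv-1\pmod n$ is where the real difficulty lies.
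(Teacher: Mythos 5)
The statement you are attempting is not proved in the paper at all: it is one of the paper's open conjectures, offered on the strength of computer experiments, so there is no proof of record to compare against and your text must stand as a complete argument on its own. It does not. What you have written is a plan of attack whose decisive step is explicitly deferred: the entire content of the ``if and only if'' is the sign dichotomy for the area change, and your treatment of it consists of the expectation that the bookkeeping ``collapses'' to a hexagon comparison when $k\equiv 1 \pmod n$, together with the sentence that for other residues the contributions ``either balance (giving equality of the ratio) or reverse.'' That is not an argument. The ``only if'' direction requires proving that for \emph{every} residue $k\not\equiv 1\pmod n$ the shifted setup's ratio does not exceed $2/d$; nothing you write excludes a strict area decrease for, say, $k\equiv -1\pmod n$ (a case you yourself flag as unresolved), and you give no mechanism forcing equality or an area increase for scrambled vertex types. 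Your own closing paragraph concedes that this is ``where the real difficulty lies'' --- which is exactly why the authors list the statement as a conjecture rather than a theorem.

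There is a second, subtler gap: the general perimeter-invariance lemma is asserted by analogy, not proved. In the paper's two worked cases ($(5,4)$ and $(4,3)$) the cancellation is verified vertex by vertex using the explicit angles $45^\circ$, $63^\circ$, $108^\circ$ and $18^\circ$, $90^\circ$; in a general shifted $(k,n)$-setup the moved polygon's $n$ vertices occupy $n$ distinct positions relative to the translation direction and to the edges of the two angularly adjacent polygons, and one must check that increments and decrements cancel for each vertex type uniformly in $k$ and $n$. Your reduction to ``the derivative of an affine function vanishes'' restates the problem rather than solving it, since computing that derivative \emph{is} the trigonometric cancellation. To be fair, the skeleton is sound and faithfully generalises the paper's method for the five-square and four-triangle theorems: perimeter invariance plus signed-area bookkeeping under the $C_k$ symmetry is the natural route, your inner-angle formula $\delta=\pi-\tfrac{2(k-1)\pi}{kn}$ correctly reproduces $108^\circ$ and $90^\circ$ in the known cases, and the residue bookkeeping $r_j\equiv -jk^{-1}\pmod n$ is the right way to see where the hypothesis $k\equiv 1\pmod n$ must enter. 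But as it stands this is an outline of an attack on an open problem, with both halves of the equivalence unestablished.
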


Since Keleti's boundedness result works in $\R^d$ as well it is natural to ask the following:

\begin{question}
  Do the analogous constructions work in higher dimensions for regular polyhedrons?
\end{question}

As we saw it in Section \ref{s:other} there is an example of four squares with perimeter-to-area 
ratio greater than 4, but we could not find any using only three. 
\begin{question}
  What is the minimum number of squares that form such an example? Or in general, what is the minimum
  number of regular $n$-gons for which the perimeter-to-area ratio of the union is greater than the ratio in case of a single regular $n$-gon? Is it equal to $n$?
\end{question}

Since we have found the first examples with a probabilistic computer algorithm, it would be interesting to know that whether
the 'majority' of the setups close to the basic setup give an example in some sense. For $k$-many regular $n$-gons let us denote the centres by $C_1, C_2, \dots , C_k$ and the rotations of the polygons by $r_1,\dots,r_k$ respectively. Let $f_{k,n}(C_1,\dots,C_k,r_1,\dots,r_k)$ be the perimeter-to-area ratio of such a setup and $p_0=(C^0_1,\dots,C^0_k,r^0_1,\dots,r^0_k)$ the centres and rotations of the basic $(k,n)$-setup (of course, $C^0_i=(0,0)$). 

\begin{question}
  What are the derivatives of the function $f_{k,n}$ at the point $p_0$? Is this point a local minimum of $f_{k,n}$? Or does there exist a neighbourhood of $p_0$ where almost all points form an example with greater perimeter-to-area ratio than the basic setup?
\end{question}

We also could not go close to the current best upper bound of about 5.6 
proved for the ratio by Gyenes. The best example we could find with the help 
of a computer has ratio about $4.34$ and contains $100$ squares. Since with increased number of squares we could increase significantly the ratio it may happen that the lowest upper bound for the ratio cannot be realised by an a single example. 
\begin{question}
  What is the optimal upper bound for the ratio?  Does there exist a construction which maximises the ratio? 
\end{question}
 Even if there is no such a construction, it is possible that there exists a construction which maximises
the ratio for a fixed number of polygons.
\begin{question}
 What is the optimal upper bound for the ratio if we can use $k$ many polygons?  Does there exist a construction which maximises the ratio in this case? 
\end{question}
 Finally, except for the result of Gyenes concerning the union of discs, there is no example of a compact convex set, for which the analogous question to Keleti's conjecture would be true. 
 
 \begin{question}
  Does there exist a compact convex set $C$ apart from a disc, so that $p(C)/a(C)$ maximises the perimeter-to-area ratio among the finite unions of sets congruent to $C$?
 \end{question}

\end{document}